\title[Cohomology of free loop spaces]
{
Cohomology of classifying spaces of loop groups and finite Chevalley groups 
associated with spin groups
}
\author
{
Masaki Kameko
}
\address
{
Department of Mathematical Sciences,
Shibaura Institute of Technology,
307 Minuma-ku Fukasaku, Saitama-City 337-8570, Japan
}
\email
{
kameko@shibaura-it.ac.jp
}
\thanks
{
The author is partially supported by the Japan Society for the Promotion of Science, 
Grant-in-Aid for Scientific Research (C) 22540102, 25400097.
}
\newtheorem{theorem}{Theorem}[section]
\newtheorem{proposition}{Proposition}[section]
\newtheorem{conjecture}{Conjecture}[section]
\theoremstyle{definition}
\let\c@proposition=\c@theorem
\let\c@lemma=\c@theorem
\let\c@corollary=\c@theorem
\let\c@conjecture=\c@theorem
\let\c@definition=\c@theorem
\let\c@remark=\c@theorem
\let\c@assumption=\c@theorem
\newcommand{\Loop}{\mathcal{L}}
\newcommand{\spin}{\mathrm{Spin}}
\newcommand{\sq}{\mathrm{Sq}}
\newcommand{\Bousfield}[1]
{ (\mathbb{Z}/\ell)_\infty #1 }
\newcommand{\Friedlander}[1]
{\mathop{\mathrm{holim}}_{\longleftarrow} \Bousfield{(B#1)}_{et}}
\newcommand{\Closure}{\overline{\mathbb{F}}_p}
\newcommand{\Spec}[1]{\mathrm{Spec}(#1)}
\newcommand{\HyperRigidCover}[1]{\mathrm{HRR}(#1)}
\begin{document}

\begin{abstract}

Let  $q$ be a power of an odd prime $p$ and $\mathbb{F}_q$ 
the finite field with $q$ elements. 
For $n\geq 3$, we prove the mod $2$ cohomology  of the finite Chevalley group 
$\spin_n(\mathbb{F}_q)$
is isomorphic to that of the classifying space of the loop group of the spin group 
$\spin(n)$.
\end{abstract}

\maketitle

\section{Introduction}

In this paper, we reduce the computation of the mod $2$ 
cohomology of certain homotopy fixed 
point sets associated with the spin group $\spin(n)$ to the computation of  Gysin sequences. 
As an  application of such reduction, we prove the following theorem.

\begin{theorem}\label{theorem:main1}
Suppose that $n \geq 3$.
Let $q$ be a power of an odd prime $p$.
Let $\mathbb{F}_q$ be the finite group with $q$ elements.
As a graded $\mathbb{Z}/2$-module, 
the mod $2$ cohomology of the  finite Chevalley group $\spin_{n}(\mathbb{F}_q)$
is isomorphic to 
that of the free loop space of the classifying space of  the spin group 
$\spin(n)$.
\end{theorem}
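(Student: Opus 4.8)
The plan is to replace the finite group by a space and carry out the cohomology computation there. First I would invoke the comparison theorem of Friedlander and Mislin: since $2\neq p$, the $2$-completion of $B\spin_n(\mathbb{F}_q)$ is homotopy equivalent to the homotopy fixed point set $(B\spin(n))^{h\psi^q}$ of an unstable Adams operation $\psi^q$ of degree $q$ on $(B\spin(n))^{\wedge}_2$, where $\spin(n)$ now denotes the compact Lie group and $\psi^q$ is realized by the Frobenius. In particular $H^*(B\spin_n(\mathbb{F}_q);\mathbb{Z}/2)\cong H^*((B\spin(n))^{h\psi^q};\mathbb{Z}/2)$, and this homotopy fixed point set is the homotopy pullback of the diagonal $B\spin(n)\to B\spin(n)\times B\spin(n)$ along $(\mathrm{id},\psi^q)$; for $\psi^q=\mathrm{id}$ this pullback is exactly the free loop space $\Loop B\spin(n)=\mathrm{Map}(S^1,B\spin(n))$.

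Next I would observe that for $q$ odd, $\psi^q$ acts as the identity on $H^*(B\spin(n);\mathbb{Z}/2)$. Indeed, $\psi^q$ commutes with the Steenrod algebra; it fixes the Stiefel--Whitney classes pulled back from $BSO(n)$ (for $q$ odd an Adams operation does not change mod $2$ characteristic classes); and it fixes the one remaining polynomial generator, which comes from the spin representation, by a degree argument, since that generator sits in a degree which is a power of $2$ and an Adams operation multiplies a class of degree $2k$ by $q^{k}\equiv 1\bmod 2$. Consequently $(\mathrm{id},\psi^q)^*$ and $\Delta^*$ induce the same homomorphism $H^*(B\spin(n)\times B\spin(n);\mathbb{Z}/2)\to H^*(B\spin(n);\mathbb{Z}/2)$, so the Eilenberg--Moore (equivalently, Serre) spectral sequences computing the mod $2$ cohomology of $(B\spin(n))^{h\psi^q}$ and of $\Loop B\spin(n)$ have the same $E_2$-page.

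The heart of the argument is to show these two spectral sequences also have the same $E_\infty$-page. This step is genuinely needed: $\psi^q$ is \emph{not} homotopic to the identity on $(B\spin(n))^{\wedge}_2$ (it multiplies $H^4(-;\mathbb{Z}_2)$ by $q^2\neq 1$), so the isomorphism can only be expected at the level of graded $\mathbb{Z}/2$-modules. Following the reduction announced in the introduction, I would compute both $E_\infty$-pages by means of a finite list of Gysin long exact sequences. On the loop space side one can use, for instance, the circle bundle $S^1\to B\spin(n)\to B\spin^c(n)$ arising from $\spin(n)\subset\spin^c(n)\twoheadrightarrow S^1$, together with the sphere bundles $S^m\to B\spin(m)\to B\spin(m+1)$, to reduce $H^*(\Loop B\spin(n);\mathbb{Z}/2)$ to cohomology of spaces whose mod $2$ cohomology is polynomial; one then sets up the corresponding $\psi^q$-twisted Gysin sequences for $(B\spin(n))^{h\psi^q}$ and checks that $\psi^q$ fixes every base cohomology group and every Euler class that occurs (integrally it multiplies a rank-$2m$ Euler class by $q^m\equiv 1\bmod 2$). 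The two families of exact sequences are then isomorphic, and running the bookkeeping produces an isomorphism of the associated graded modules, hence $H^*(B\spin_n(\mathbb{F}_q);\mathbb{Z}/2)\cong H^*(\Loop B\spin(n);\mathbb{Z}/2)$ as graded $\mathbb{Z}/2$-modules.

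The main obstacle, as just indicated, is the $2$-torsion in $H^*(B\spin(n);\mathbb{Z}/2)$: for $n\geq 7$ this ring is not polynomial, so neither spectral sequence degenerates and there is no formality or Hochschild-homology shortcut available. Keeping track of how the non-transgressive differentials interact with the extra spin generator, uniformly in $n$, and verifying that this interaction is unaffected by the $\psi^q$-twist, is where the real work lies; the reduction to Gysin sequences is precisely the device that makes this bookkeeping feasible.
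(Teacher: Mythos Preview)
Your overall plan---Friedlander's comparison, then matching the free and Frobenius-twisted homotopy fixed point sets via Gysin sequences---has the right shape, but the specific Gysin sequences you propose do not do the job, and the device that makes the comparison go through is absent from your sketch.

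The fibrations you list do not reduce to spaces with polynomial mod~$2$ cohomology. The circle bundle $S^1\to B\spin(n)\to B\spin^c(n)$ lands in $B\spin^c(n)$, whose mod~$2$ cohomology has the same difficulties as $B\spin(n)$ for large $n$; and the sphere bundles $S^m\to B\spin(m)\to B\spin(m+1)$ increase $n$ rather than decrease it, so no induction terminates. (Incidentally, $H^*(B\spin(n);\mathbb{Z}/2)$ is polynomial for all $n\leq 9$; the failure begins at $n=10$, not $n=7$.) More seriously, even if one had a chain of sphere bundles, knowing that $\psi^q$ fixes Euler classes numerically does not by itself produce an isomorphism between the untwisted and $\psi^q$-twisted Gysin sequences: those sequences live over different total spaces, and one needs an actual map or zigzag identifying them.

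The paper supplies exactly that, but through $SO(n)$ rather than $\spin^c(n)$. Since $H^*(BSO_n)$ \emph{is} polynomial, $H^*(\Loop BSO_n)$ is known and free over $H^*(BSO_n)$. The crucial step---missing from your outline---is Theorem~\ref{theorem:SO}: both $H^*(\Loop BSO_n)$ and $H^*(\Loop_{\phi^q} BSO_n)$ are identified, as algebras over the Steenrod algebra, with the \emph{same} concrete ring $H^*(\tilde{B}A_{n-1})^{S_n}$, by detection on the elementary abelian $2$-subgroup $A_{n-1}\subset SO_n(\mathbb{F}_q)$ (on which Frobenius is literally the identity, Proposition~\ref{proposition:ba}) together with the invariant-theoretic Proposition~\ref{proposition:invariant}. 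Only then does a Gysin sequence enter, and it is a single one: pulling back $\Loop BSO_n$ and $\Loop_{\phi^q} BSO_n$ along the double cover $p_n\colon B\spin_n\to BSO_n$ yields spaces $F_n$, $F_{n,q}$ whose cohomology is a tensor product (Propositions~\ref{proposition:free1}, \ref{proposition:free2}); the natural maps $\eta_n\colon\Loop B\spin_n\to F_n$ and $\eta_{n,q}\colon\Loop_{\phi^q} B\spin_n\to F_{n,q}$ are $S^0$-bundles whose degree-one Euler classes correspond under the isomorphism just established, so the two Gysin sequences match and the graded-module isomorphism follows. In short, the reduction is not a chain of sphere-bundle Gysin sequences but one $S^0$-Gysin sequence, enabled by first proving the strong form of the isomorphism for $SO(n)$ via detection on an elementary abelian subgroup.
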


We denote by $\Loop X$ the free loop space of $X$, that is, 
the space of continuous maps from the circle $S^1$ to $X$, 
so that $\Loop X=\mathrm{Map}(S^1, X)$. For a topological group $G$, 
we have the loop group $\Loop G$. 
First, we recall the relation between the cohomology of the classifying space
$B\Loop G$ 
and that of 
homotopy fixed point sets. 
We recall the following proposition, that  is Proposition~{2.4} of Atiyah
and Bott \cite{atiyah-bott-1982}.
\begin{proposition}[Atiyah-Bott]\label{proposition:atiyah-bott}
Let $G$ be a connected Lie group.
Let $P$ be a principal $G$-bundle over a manifold $M$ and denote 
the gauge group by $\mathcal{G}(P)$.
Then, $B\mathcal{G}(P)\simeq \mathrm{Map}_{P}(M, BG)$, 
where $\mathrm{Map}_{P}(M, BG)$ is the 
subset of $\mathrm{Map}(M, BG)$ whose pull-back of the universal 
$G$-bundle is isomorphic to $P$.
\end{proposition}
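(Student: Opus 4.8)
The plan is to build an explicit model of the universal bundle $E\mathcal{G}(P)\to B\mathcal{G}(P)$ out of $EG\to BG$, and then to identify its base with $\mathrm{Map}_P(M,BG)$.

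Let $\mathcal{E}$ denote the space of $G$-equivariant maps $P\to EG$; equivalently, $\mathcal{E}$ is the space of sections of the associated fibre bundle $P\times_G EG\to M$. Its fibre $EG$ is contractible and $M$ is paracompact, so by Dold's theorem the section space $\mathcal{E}$ is contractible. The gauge group $\mathcal{G}(P)=\mathrm{Aut}_M(P)$ (equivalently, the group of sections of $P\times_G G\to M$ for the conjugation action of $G$ on itself) acts on $\mathcal{E}$ by $\gamma\cdot\phi=\phi\circ\gamma^{-1}$, and the action is free: if $\phi\circ\gamma^{-1}=\phi$, write $\gamma^{-1}(p)=p\cdot g(p)$ with $g\colon P\to G$; then $G$-equivariance of $\phi$ gives $\phi(p)\cdot g(p)=\phi(p)$, so $g\equiv e$ because $G$ acts freely on $EG$, hence $\gamma=\mathrm{id}$.

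A $G$-map $\phi\colon P\to EG$ descends to a map $\bar\phi\colon M\to BG$ for which $\phi$ exhibits a pullback square, so $\bar\phi^{*}EG\cong P$; thus $\phi\mapsto\bar\phi$ is a $\mathcal{G}(P)$-invariant map $\pi\colon\mathcal{E}\to\mathrm{Map}_P(M,BG)$. It is onto, since any $f$ with $f^{*}EG\cong P$ is covered by the composite of a chosen isomorphism $P\xrightarrow{\cong}f^{*}EG$ with the canonical map $f^{*}EG\to EG$. And it separates orbits: if $\phi_1,\phi_2$ both cover $f$, each exhibits $P$ as $f^{*}EG$, so $\gamma:=\phi_2^{-1}\circ\phi_1$ is an automorphism of $P$ over $M$ with $\phi_1=\gamma^{-1}\cdot\phi_2$. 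I would then show that $\pi$ admits local sections --- around $f$ one chooses, for nearby $f'$, an isomorphism $(f')^{*}EG\cong P$, i.e.\ a $G$-lift of $f'$ --- so that, together with freeness, $\pi\colon\mathcal{E}\to\mathrm{Map}_P(M,BG)$ is a locally trivial principal $\mathcal{G}(P)$-bundle. Since $\mathcal{E}$ is contractible, $\pi$ is then a model for $E\mathcal{G}(P)\to B\mathcal{G}(P)$, and hence $B\mathcal{G}(P)\simeq\mathrm{Map}_P(M,BG)$.

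\emph{The main obstacle} is the point-set step: showing that $\pi$ is a locally trivial principal bundle, i.e.\ that the classifying map $f'\mapsto(f')^{*}EG$ can be trivialized continuously near each $f$ and that the resulting local sections make the free $\mathcal{G}(P)$-action locally look like translation. This uses that $P\to M$ is a numerable locally trivial bundle over a paracompact manifold (which supplies a slice) and that $M$ is nice enough --- e.g.\ compact --- for the mapping spaces and for $\mathcal{G}(P)$ (a Fr\'echet Lie group in that case) to behave well; everything else is the homotopy-theoretic skeleton and is routine. An alternative that bypasses these verifications is to use that the topological groupoid of principal $G$-bundles over $M$ and their isomorphisms has classifying space $\mathrm{Map}(M,BG)$; a groupoid is equivalent to the coproduct over isomorphism classes of objects of the classifying spaces of their automorphism groups, and restricting attention to the component of $P$ yields the proposition.
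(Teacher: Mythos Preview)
The paper does not give its own proof of this proposition; it merely recalls it as Proposition~2.4 of Atiyah--Bott \cite{atiyah-bott-1982}. Your argument is precisely the one Atiyah and Bott sketch there: the space $\mathcal{E}=\mathrm{Map}^{G}(P,EG)$ is contractible, the gauge group $\mathcal{G}(P)$ acts freely on it, and passage to $G$-orbits gives a surjection onto $\mathrm{Map}_{P}(M,BG)$ whose fibres are exactly the $\mathcal{G}(P)$-orbits. You are right that the only delicate point is the local triviality of $\pi$; Atiyah--Bott, working over a compact base, do not spell this out either and are content with the homotopy-theoretic skeleton. So your proposal matches the cited source in both content and level of rigor, and there is nothing in the present paper to compare it against beyond that citation.
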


Considering the case that $P$ is the trivial bundle over $S^1$, it is easy to see that
$\mathrm{Map}_P(S^1, BG)=\mathrm{Map}(S^1, BG)=\Loop BG$ and 
$\mathcal{G}(P)=\mathit{\Gamma}(\mathrm{Ad} P)=\Loop G$. Thus, we  have 
$B\Loop G\simeq \Loop BG$ for a connected Lie group $G$.
We recall the definition of the homotopy fixed point set of a map $f:X\to X$, 
which we denote by $\Loop_f X$. The homotopy fixed point set $\Loop_f X$ of a map $f:X\to X$
is defined by the following 
pull-back diagram:
\[
\begin{diagram}
\node{\Loop_f X} \arrow{e} \arrow{s,l}{\pi} \node{\mathrm{Map}(I,X)} \arrow{s,r}{\varphi} \\
\node {X} \arrow{e,t}{1\times f} \node{X\times X,}
\end{diagram}
\]
where $\mathrm{Map}(I,X)$ is the space of continuous maps from the unit interval 
$I=[0,1]$ to $X$, $\varphi(\lambda)=(\lambda(0), \lambda(1))$ 
and, by abuse of notation,  we denote the composition of the map 
$1\times f$ and the diagonal 
map of $X$ by $1\times f$.
The space $\Loop_f X$ is a subspace of $\mathrm{Map}(I, X)$
and the projection $\pi$ is the restriction of the evaluation at $0$, 
$\mathrm{Map}(I, X) \to X$,  to the subspace $\Loop_f X$ , that is, $\pi(\lambda)=\lambda(0)$.
It is clear that the free loop space $\Loop X$ is 
the homotopy fixed point set of the identity 
map of $X$.
Thus, the computation of the cohomology of the classifying space of the loop group 
$\Loop G$ is 
nothing but that of the free loop space of $BG$, hence 
it is that of the homotopy fixed point set of the identity map of $BG$.

Next, we recall the relation between the mod $\ell$ cohomology 
of finite Chevalley groups and that of homotopy fixed point sets, 
where $\ell$ is a prime number.
Let $p$ be a prime number district from $\ell$, $q$ a power of 
$p$ and $\mathbb{F}_q$ the finite field with $q$ elements. 
We write $\Closure$ for the algebraic closure of $\mathbb{F}_q$.
The finite Chevalley group $G(\mathbb{F}_q)$ is obtained as the fixed point set of  
the Frobenius map
\[
\phi^q:G(\Closure)\to G(\Closure),
\]
induced by the Frobenius map $\phi^q:\Closure \to \Closure$ sending $x$ to $x^q$,
where $G(\Closure)$ is the geometric points of 
a reductive group scheme over $\Closure$ obtained  
by taking a Chevalley basis for 
the complexification  of a compact connected Lie group $G$ and,
by abuse of notation, we denote by the same symbol $G$ the above 
reductive group scheme over $\Closure$ obtained from 
a compact connected Lie group $G$.
Quillen computes the mod $\ell$ cohomology of the finite general linear group 
$GL_n(\mathbb{F}_q)$ in \cite{quillen-1972}
by computing that of a homotopy fixed point set.
Friedlander expands Quillen's theory and shows that,
in general, the mod $\ell$ cohomology of the finite Chevalley group 
$G(\mathbb{F}_q)$ associated 
with a compact connected Lie group $G$ is  that of the homotopy 
fixed point set of  the Frobenius map 
\[
\phi^{q}:\left|\Friedlander{G}\right| \to \left|\Friedlander{G}\right|,
\]
where the Frobenius map $\phi^q$ is induced by 
the Frobenius map $\phi^q:\Closure \to \Closure$ above, 
$\Bousfield X$ is the Bousfield-Kan $\mathbb{Z}/\ell$-completion of a simplicial set $X$
and $|X|$ is the geometric realization of a simplicial set $X$.
For details of Quillen and Friedlander's theory, we refer the reader to 
Friedlander \cite{friedlander-1982}.
In particular, by Proposition~{8.8} and Theorem~{12.2} 
in Friedlander~\cite{friedlander-1982}, we have the following theorem.
\begin{theorem}[Friedlander]\label{theorem:friedlander}
The following holds\,{\rm :}
\begin{itemize}
\item[(1)] $\displaystyle \left|\Friedlander{G}\right|$ is homotopy equivalent to 
$\left|\Bousfield \mathrm{Sing}_\bullet (BG)\right|$ where $\mathrm{Sing}_\bullet(BG)$
 is the simplicial set obtained from 
the classifying space of the connected Lie group $G$ by applying the singular functor
and
\item[(2)] $H^{*}(BG(\mathbb{F}_q);\mathbb{Z}/\ell )
=H^{*}(\Loop_{\phi^q} \displaystyle 
\left|\Friedlander{G}\right|;\mathbb{Z}/\ell )$.
\end{itemize}
\end{theorem}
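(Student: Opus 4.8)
The plan is to treat the two assertions by different means: (1) is a comparison between an algebraically defined homotopy type and a classical topological one, proved cohomologically through base change and the Artin--Mazur comparison theorem, while (2) is an identification of the honest fixed-point group $G(\mathbb{F}_q)=G(\Closure)^{\phi^q}$ with a homotopy fixed-point set, whose engine is Lang's theorem.

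For (1), I would begin from the bar construction: $BG$ is the realization of the simplicial scheme $B_\bullet G$ with $B_nG=G^n$, and $(BG)_{et}$ is assembled levelwise from the étale homotopy types of the smooth affine schemes $G^n$ over $\Closure$, using a model by rigid hypercovers $\HyperRigidCover{\,\cdot\,}$. Since $G$ is connected, $BG$ is simply connected, so after $\mathbb{Z}/\ell$-completion a mod $\ell$ cohomology isomorphism promotes to a homotopy equivalence; it therefore suffices to match $\ell$-adic étale cohomology over $\Closure$ with the singular cohomology of the classifying space of the compact group. I would get this in two steps. First, because the reductive group scheme $G$ is already defined over $\mathbb{Z}$ and $\ell\neq p$, the specialization (base change) isomorphisms for $\ell$-adic étale cohomology, valid for coefficients prime to the characteristic, identify the $\ell$-adic cohomology of $B_\bullet G_{\Closure}$ with that of $B_\bullet G_{\mathbb{C}}$; this is the sole place where the hypothesis $\ell\neq p$ is used. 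Second, the Artin--Mazur comparison theorem over $\mathbb{C}$ identifies the $\mathbb{Z}/\ell$-completed étale homotopy type of a finite-type $\mathbb{C}$-scheme with the $\mathbb{Z}/\ell$-completion of its singular complex; applying this levelwise to $B_\bullet G_{\mathbb{C}}$ and using that $G(\mathbb{C})$ deformation retracts onto its maximal compact subgroup $G$ yields the desired $\left|\Friedlander{G}\right|\simeq\left|\Bousfield{\mathrm{Sing}_\bullet(BG)}\right|$. This is the content of Friedlander's Proposition~8.8.

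For (2), the bridge from the actual finite group to the homotopy fixed-point set is the Lang isogeny $L\colon G\to G$, $g\mapsto g^{-1}\phi^q(g)$, which is a finite étale Galois cover with structure group $G(\mathbb{F}_q)$; Lang's theorem asserts that $L$ is surjective on $\Closure$-points, equivalently that every $G$-torsor over $\mathbb{F}_q$ is trivial. I must be careful here to distinguish the étale homotopy type of $B_\bullet G$ over $\mathbb{F}_q$, which is a Borel construction (homotopy \emph{orbits}) for the Frobenius action and whose cohomology is governed by the Hochschild--Serre sequence for $\Gal{\Closure}{\mathbb{F}_q}\cong\widehat{\mathbb{Z}}$, from the homotopy \emph{fixed}-point set $\Loop_{\phi^q}\left|\Friedlander{G}\right|$ that the theorem refers to. It is the honest fixed-point group whose classifying space realizes the homotopy fixed points: passing to bar constructions and $\mathbb{Z}/\ell$-completing, one shows that the Lang cover exhibits $BG(\mathbb{F}_q)$ as the homotopy pullback of the diagonal $\left|\Friedlander{G}\right|\to\left|\Friedlander{G}\right|\times\left|\Friedlander{G}\right|$ along $(\mathrm{id},\phi^q)$, which is precisely $\Loop_{\phi^q}\left|\Friedlander{G}\right|$ in the sense of the pull-back diagram defining $\Loop_f X$. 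Taking mod $\ell$ cohomology of both sides gives the asserted equality, which is Friedlander's Theorem~12.2.

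The main obstacle is exactly the passage from honest fixed points to homotopy fixed points in (2), and it is Lang's theorem that removes it: surjectivity of the Lang map forces the higher descent data to vanish, so that $BG(\mathbb{F}_q)$ realizes all of the homotopy fixed points rather than the homotopy orbits, the latter being what the naive étale type over $\mathbb{F}_q$ computes and which would give the wrong cohomology. A secondary, more technical difficulty lies in (1): one must verify that the étale homotopy functor, the $\mathbb{Z}/\ell$-completion, and the homotopy inverse limit $\mathop{\mathrm{holim}}_{\longleftarrow}$ interact correctly with the levelwise comparison, so that isomorphisms on each $G^n$ assemble into an equivalence of realizations; this amounts to controlling the convergence of the cohomology spectral sequence of the simplicial scheme $B_\bullet G$, which is where the finite-type and simple-connectivity hypotheses do their work. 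Once both identifications are in place, the computation of $H^{*}(\Loop_{\phi^q}\left|\Friedlander{G}\right|;\mathbb{Z}/\ell)$ proceeds, as the introduction indicates, through the Gysin sequence of the fibration $\Omega\left|\Friedlander{G}\right|\to\Loop_{\phi^q}\left|\Friedlander{G}\right|\to\left|\Friedlander{G}\right|$.
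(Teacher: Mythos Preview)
The paper does not actually prove this theorem: it is stated as a result of Friedlander, with the two parts attributed to Proposition~8.8 and Theorem~12.2 of \cite{friedlander-1982}, and no further argument is given. Your proposal therefore goes well beyond what the paper does, supplying a sketch of the arguments underlying Friedlander's results rather than merely citing them.

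That said, your outline is accurate in its main lines. For (1), the route through smooth base change in $\ell$-adic cohomology (using that $G$ is defined over $\mathbb{Z}$ and $\ell\neq p$) followed by the Artin--Mazur comparison over $\mathbb{C}$, together with the fact that a mod~$\ell$ cohomology isomorphism of simply connected spaces becomes an equivalence after $\mathbb{Z}/\ell$-completion, is exactly Friedlander's strategy. For (2), identifying the Lang isogeny as the key input and recognizing $BG(\mathbb{F}_q)$ as the homotopy pullback of $(\mathrm{id},\phi^q)$ against the diagonal---hence as $\Loop_{\phi^q}$ of the target---is also the correct mechanism, and your caution distinguishing homotopy fixed points from homotopy orbits is well placed.

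One small correction: your closing sentence about the subsequent computation ``through the Gysin sequence of the fibration $\Omega\left|\Friedlander{G}\right|\to\Loop_{\phi^q}\left|\Friedlander{G}\right|\to\left|\Friedlander{G}\right|$'' misreads the paper. That fibration is analyzed via the Leray--Serre spectral sequence (Proposition~\ref{proposition:mono}); the Gysin sequence enters only later, for the $S^0$-fibrations $\eta_n$ and $\eta_{n,q}$ relating $\Loop B\spin_n$ and $\Loop_{\phi^q}B\spin_n$ to the auxiliary spaces $F_n$ and $F_{n,q}$. This is tangential to the theorem at hand, but worth noting.
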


Thus, to prove Theorem~\ref{theorem:main1}, 
it suffices to compute  the mod
$2$ cohomology of homotopy fixed point sets of self-maps of 
a space $B\spin_n$, defined by
\[
B\spin_n=\left|\mathop{\mathrm{holim}}_{\longleftarrow} (\mathbb{Z}/2)_{\infty}{(B\spin(n))}_{et}\right|,
\]
where  we fix an odd prime number $p$ and 
$\spin(n)$ in the right-hand side is a reductive group scheme over $\Closure$
associated with the spin group $\spin(n)$.
Suppose that  $q$ is a power of $p$. Then, we have the Frobenius map $\phi^q:B\spin_n \to B\spin_n$.
Theorem~\ref{theorem:main1} follows from the following theorem.

\begin{theorem} \label{theorem:main2}
Let $q$ be a power of $p$, $\phi^q:B\spin_n \to B\spin_n$ the Frobenius map.
As a graded $\mathbb{Z}/2$-module, the mod $2$ cohomology of $\Loop_{\phi^q} B\spin_n$ is isomorphic to that of $\Loop B\spin_n$.
\end{theorem}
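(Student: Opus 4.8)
The plan is to push the computation down from $B\spin_n$ to a space whose mod $2$ cohomology is polynomial and on which the Frobenius is transparently trivial, and then to recover the spin information from the resulting Gysin sequences. Let $BSO_n$ be the $(\mathbb{Z}/2)$-completed \'etale homotopy type built from $SO(n)$ over $\Closure$ exactly as $B\spin_n$ is built, and recall that the central extension $1\to\mu_2\to\spin(n)\to SO(n)\to 1$ of group schemes over $\Closure$ (here $\mu_2\cong\mathbb{Z}/2$ since $p$ is odd) induces a fibre sequence $K(\mathbb{Z}/2,1)\to B\spin_n\to BSO_n$ classified by a map $w_2\colon BSO_n\to K(\mathbb{Z}/2,2)$, everything equivariant for the Frobenius. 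Since $q$ is odd, the Frobenius is the identity on $\mu_2(\Closure)=\{\pm1\}$, hence on $B\mu_2=K(\mathbb{Z}/2,1)$ and on $B^2\mu_2=K(\mathbb{Z}/2,2)$. Applying the homotopy limit $\Loop_f(-)$ gives, for each of $f=\phi^q$ and $f=\mathrm{id}$, a fibre sequence
\[
\Loop_f K(\mathbb{Z}/2,1)\longrightarrow\Loop_f B\spin_n\longrightarrow\Loop_f BSO_n
\]
in which the fibre $\Loop_f K(\mathbb{Z}/2,1)\simeq\Loop K(\mathbb{Z}/2,1)\simeq K(\mathbb{Z}/2,1)\times\mathbb{Z}/2$ and the space $\Loop_f K(\mathbb{Z}/2,2)\simeq K(\mathbb{Z}/2,2)\times K(\mathbb{Z}/2,1)$ are literally the same in both cases.

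Next I would treat the base $\Loop_f BSO_n$. Although $\phi^q$ is not homotopic to the identity on $BSO_n$, it does induce the identity on $H^*(BSO_n;\mathbb{Z}/2)$: this cohomology is polynomial, hence reduced, so restriction to a maximal elementary abelian $2$-subgroup $E\subset SO(n)$ is injective, and $\phi^q$ is the identity on $E$ because $q$ is odd. In the fibration $SO(n)\simeq\Omega BSO_n\to\Loop_f BSO_n\to BSO_n$ the transgression is $1-(\phi^q)^*$, hence zero; and $H^*(SO(n);\mathbb{Z}/2)$ is generated as an algebra by transgressive classes, so the Serre spectral sequence collapses and
\[
H^*(\Loop_{\phi^q}BSO_n;\mathbb{Z}/2)\cong H^*(BSO_n;\mathbb{Z}/2)\otimes H^*(SO(n);\mathbb{Z}/2)\cong H^*(\Loop BSO_n;\mathbb{Z}/2),
\]
an isomorphism which one can arrange to identify the images of $H^*(BSO_n;\mathbb{Z}/2)$ and the fibrewise suspension classes, compatibly with the Steenrod algebra.

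Now I would assemble. Applying $\Loop_f$ to $w_2$ presents the classifying data of the displayed fibre sequence as a pair: a class $\tilde u_f=\pi^*w_2\in H^2(\Loop_f BSO_n;\mathbb{Z}/2)$, where $\pi$ is the evaluation $\Loop_f BSO_n\to BSO_n$, and a class $\tilde v_f=\sigma w_2\in H^1(\Loop_f BSO_n;\mathbb{Z}/2)$, the nonzero element; by the previous paragraph this pair corresponds to the same data for $f=\phi^q$ and $f=\mathrm{id}$. The class $\tilde v_f$ governs the monodromy of $\pi_1(\Loop_f BSO_n)=\mathbb{Z}/2$ on $\pi_0$ of the disconnected fibre $K(\mathbb{Z}/2,1)\times\mathbb{Z}/2$, so over the connected double cover $\widetilde{\Loop_f BSO_n}$ classified by $\tilde v_f$ one obtains a genuine $\mathbb{RP}^\infty$-bundle, classified by the restriction of $\tilde u_f$, whose total space doubly covers $\Loop_f B\spin_n$. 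The Serre spectral sequence of this $\mathbb{RP}^\infty$-bundle has Kudo transgressions on $t,t^2,t^4,\dots$ equal to $\tilde u_f,\sq^1\tilde u_f,\sq^2\sq^1\tilde u_f,\dots$, so it is built from precisely the Gysin sequences attached to the sequence $w_2,\sq^1 w_2,\sq^2\sq^1 w_2,\dots$ that Quillen uses to compute $H^*(B\spin(n);\mathbb{Z}/2)$, only now carried out over $H^*(\Loop_f BSO_n;\mathbb{Z}/2)$; combined with the transfer Gysin sequence of the double cover this determines $H^*(\Loop_f B\spin_n;\mathbb{Z}/2)$. Since $\tilde u_{\phi^q}$ corresponds to $\tilde u_{\mathrm{id}}$ and $(\phi^q)^*$ commutes with the Steenrod operations, each Gysin sequence for $f=\phi^q$ is isomorphic to the one for $f=\mathrm{id}$, so the abutments agree, which is the theorem.

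The delicate point is making this comparison rigorous: one must verify that the isomorphism $H^*(\Loop_{\phi^q}BSO_n;\mathbb{Z}/2)\cong H^*(\Loop BSO_n;\mathbb{Z}/2)$ can be chosen compatibly with everything the Gysin sequences see — the image of $H^*(BSO_n;\mathbb{Z}/2)$, the class $\sigma w_2$, and the Steenrod action on them — and that, granting this, the $E_\infty$-terms of the two families of spectral sequences have the same Poincar\'e series, so that the graded $\mathbb{Z}/2$-modules $H^*(\Loop_f B\spin_n;\mathbb{Z}/2)$ coincide. This is where the characteristic subtlety of the spin case re-enters: the sequence $w_2,\sq^1 w_2,\sq^2\sq^1 w_2,\dots$ ceases to be regular after a number of steps depending on $n$ in a delicate way, so that the bulk of the work is actually evaluating the Gysin sequences over $H^*(\Loop_f BSO_n;\mathbb{Z}/2)$; one then checks this evaluation is insensitive to $f$, for which controlling $\Loop_f w_2$ on mod $2$ cohomology is the main technical input.
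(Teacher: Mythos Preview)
Your overall strategy---reduce to $BSO_n$, then recover the spin information via Gysin sequences---is the same as the paper's. But the execution diverges in two important ways, and the second is exactly the gap you flag as ``delicate'' at the end.

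\textbf{First, the paper uses a single Gysin sequence, not a tower.} Rather than analysing the fibration with disconnected fibre $K(\mathbb{Z}/2,1)\times\mathbb{Z}/2$ and invoking Quillen's iterated Gysin sequences on $w_2,\sq^1 w_2,\sq^2\sq^1 w_2,\dots$, the paper factors $\Loop_f B\spin_n\to\Loop_f BSO_n$ through the pullback
\[
F_{n,f}\;=\;B\spin_n\times_{BSO_n}\Loop_f BSO_n.
\]
An Eilenberg--Moore argument (using that $H^*(\Loop_f BSO_n)$ is free over $H^*(BSO_n)$, which is the content of the Serre collapse) gives
\(
H^*(F_{n,f})\cong H^*(B\spin_n)\otimes_{H^*(BSO_n)}H^*(\Loop_f BSO_n),
\)
so all of Quillen's relations are absorbed once and for all into the factor $H^*(B\spin_n)$. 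The remaining map $\Loop_f B\spin_n\to F_{n,f}$ has homotopy fibre $\Omega K(\mathbb{Z}/2,1)=S^0$, hence yields one Gysin sequence whose connecting map is multiplication by the unique nonzero class in $H^1(F_{n,f})\cong\mathbb{Z}/2$. This completely sidesteps the regularity issues you anticipate.

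\textbf{Second, and more seriously, the paper supplies the missing ring isomorphism.} Your Serre collapse gives $H^*(\Loop_{\phi^q}BSO_n)\cong H^*(BSO_n)\otimes H^*(SO(n))$ only at the level of associated graded modules; the assertion that this can be ``arranged'' into an isomorphism of algebras over the Steenrod algebra, matching the classes the Gysin sequences see, is precisely what has to be proved. The paper does this by producing actual maps of spaces: using the diagonal elementary abelian $2$-subgroup $A_{n-1}\subset SO_n(\mathbb{F}_p)$ and the strict equality $\phi^q\circ\alpha_n=\alpha_n$, one builds a single space $\tilde{B}A_{n-1}$ receiving maps $\tilde\alpha_n$ from $\Loop BSO_n$ and $\tilde\alpha_{n,q}$ from $\Loop_{\phi^q}BSO_n$, and shows both induce isomorphisms onto $H^*(\tilde{B}A_{n-1})^{S_n}$ (the key algebraic input being $H^*(BA_{n-1})^{S_n}=\mathbb{Z}/2[w_2,\dots,w_n]$). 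That gives a genuine Steenrod-algebra isomorphism between the two $H^*(\Loop_f BSO_n)$, under which the Euler classes correspond; the single Gysin sequence then finishes the argument. Without an analogue of this third-space comparison, your outline is incomplete at exactly the point you identify.
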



Despite the fact that the loop group $\Loop G$ and finite Chevalley groups
$G(\mathbb{F}_q)$ are different form each other either as topological spaces
 or as groups, Tezuka conjectures the following in \cite{tezuka-1998}.
\begin{conjecture}[Tezuka]\label{conjecture:tezuka}
Let $\ell$ be a prime number such that $\ell$ (reps., $4$) divides $q-1$ if $\ell$
 is odd (reps., even). Then, we have a ring isomorphism \[
H^{*}(\Loop BG; \mathbb{Z}/\ell) \cong H^{*}(BG(\mathbb{F}_q); \mathbb{Z}/\ell).
\]
\end{conjecture}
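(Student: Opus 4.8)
The plan is to translate both sides into the mod $\ell$ cohomology of homotopy fixed point sets and then to compare those two fixed point sets through one algebraic model. Write $X=\left|\Friedlander{G}\right|$, which by part~(1) of Theorem~\ref{theorem:friedlander} is the Bousfield--Kan $\mathbb{Z}/\ell$-completion of $BG$. The right-hand side is $H^{*}(\Loop_{\phi^q}X;\mathbb{Z}/\ell)$ by part~(2) of Theorem~\ref{theorem:friedlander}. For the left-hand side, the free loop space is the homotopy fixed point set of the identity, and the free loop space functor preserves mod $\ell$ cohomology equivalences of simply connected finite type spaces via the fibration $\Omega X\to \Loop X\to X$; hence $H^{*}(\Loop BG;\mathbb{Z}/\ell)=H^{*}(\Loop_{\mathrm{id}}X;\mathbb{Z}/\ell)$. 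Thus the conjecture asserts that the two self-maps $\mathrm{id}$ and $\phi^q$ of $X$ have homotopy fixed point sets with isomorphic mod $\ell$ cohomology \emph{rings}.

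The arithmetic hypothesis is what makes $\phi^q$ comparable to $\mathrm{id}$. The Frobenius $\phi^q$ is an unstable Adams operation of degree $q$, acting on a cohomology class of degree $2i$ by multiplication by $q^{i}$. The assumption that $\ell$ divides $q-1$ for $\ell$ odd, and that $4$ divides $q-1$ for $\ell=2$ (the stronger condition being needed to control the degree-two classes and the associated Bocksteins), forces $q^{i}\equiv 1 \pmod{\ell}$, so that $(\phi^q)^{*}=\mathrm{id}$ on $H^{*}(X;\mathbb{Z}/\ell)=H^{*}(BG;\mathbb{Z}/\ell)$. This is the precise sense in which $\phi^q$ is indistinguishable from the identity to first order.

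Both $\Loop_{\mathrm{id}}X$ and $\Loop_{\phi^q}X$ are given by the homotopy pullback square in the excerpt, with $X$ simply connected since $G$ is connected. Each therefore carries a convergent Eilenberg--Moore spectral sequence with
\[
E_2=\mathrm{Cotor}_{H^{*}(X)\otimes H^{*}(X)}\bigl(H^{*}(X),\,H^{*}(X)\bigr),
\]
where the two copies of $H^{*}(X)$ are regarded as comodules over $H^{*}(X)\otimes H^{*}(X)$ through $\Delta^{*}$ and through $(1,f)^{*}$, with $f=\mathrm{id}$ or $f=\phi^q$. Because $(\phi^q)^{*}=\mathrm{id}$ we have $(1,\phi^q)^{*}=\Delta^{*}$, so the two comodule structures agree with those of the free loop space, and the two $E_2$-pages coincide as bigraded algebras, each equal to the Hochschild cohomology $HH^{*}\bigl(H^{*}(BG;\mathbb{Z}/\ell)\bigr)$ that computes $E_2$ for $\Loop BG$.

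The hard part will be to upgrade this agreement of $E_2$-pages to an isomorphism of abutments as \emph{rings}, and this is where the conjecture is substantive rather than formal. The equality $(\phi^q)^{*}=\mathrm{id}$ controls only the input of the spectral sequence; the higher differentials and the multiplicative extension problems depend on finer secondary data that distinguish $\phi^q$ from $\mathrm{id}$ --- Steenrod operations, Massey products, and the $k$-invariants of the Postnikov tower --- and these need not coincide a priori. When $H^{*}(BG;\mathbb{Z}/\ell)$ is polynomial both spectral sequences collapse at $E_2$ and the argument closes, but in the presence of $\ell$-torsion, precisely the case of $\spin(n)$ at $\ell=2$ treated in Theorem~\ref{theorem:main2} where only an additive isomorphism is established, one must show that $\phi^q$ acts compatibly with $\mathrm{id}$ on the whole cobar construction and not merely on cohomology. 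I would attempt this by an induction up the mod $\ell$ Postnikov tower of $X$, exploiting that $q^{i}-1$ is $\ell$-divisible so that the discrepancy between $\phi^q$ and $\mathrm{id}$ is a mod $\ell$ trivial perturbation at each stage, combined with a Bockstein spectral sequence comparison to pin down the extensions. Constructing an actual map between the two homotopy fixed point sets that realizes the $E_2$-isomorphism, rather than comparing the spectral sequences only abstractly, is the decisive and still-open step.
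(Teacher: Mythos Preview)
The statement you are attempting to prove is labeled \emph{Conjecture} in the paper, and the paper does not prove it; it is stated as Tezuka's open conjecture, and the paper's contribution is a partial confirmation (an additive isomorphism for $G=\spin(n)$, $\ell=2$ in Theorem~\ref{theorem:main2}, and a Steenrod-algebra isomorphism for $G=SO(n)$ in Theorem~\ref{theorem:SO}). So there is no ``paper's own proof'' to compare against, and your task was ill-posed from the start.

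That said, your write-up is an honest outline of why the conjecture is plausible, and you yourself identify the gap correctly in the last two paragraphs. Having $(\phi^q)^*=\mathrm{id}$ on $H^{*}(BG;\mathbb{Z}/\ell)$ gives coinciding $E_2$-pages of the two Eilenberg--Moore spectral sequences, but since the underlying pullback squares are built from different maps $1\times \mathrm{id}$ and $1\times \phi^q$, there is no map of fibre squares and hence no induced map of spectral sequences; an abstract bijection of $E_2$-terms says nothing about differentials or multiplicative extensions. Your proposed Postnikov-tower induction is a heuristic, not an argument: ``$q^i-1$ is $\ell$-divisible'' controls the action on homotopy groups stage by stage, but does not by itself produce a homotopy between $\phi^q$ and $\mathrm{id}$ (indeed $\phi^q\not\simeq\mathrm{id}$ in general), nor a zig-zag inducing a ring isomorphism on $H^*(\Loop_f X)$. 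This is exactly the obstruction the paper circumvents in the $SO(n)$ case by detecting both cohomologies inside a common third object $H^*(\tilde{B}A_{n-1})^{S_n}$ via genuine maps $\tilde\alpha_n$, $\tilde\alpha_{n,q}$; your approach has no analogue of this.

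One smaller point: the sentence ``acting on a cohomology class of degree $2i$ by multiplication by $q^i$'' is only literally correct for classes detected on the maximal torus. When $H^*(BG;\mathbb{Z}/\ell)$ has $\ell$-torsion (e.g.\ $B\spin(n)$ at $\ell=2$) there are odd-degree classes, and the conclusion $(\phi^q)^*=\mathrm{id}$ requires a separate argument rather than the weight count you give.
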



Our main theorem, Theorem~\ref{theorem:main1}, 
 is  a weak form of Tezuka's conjecture for 
$\ell=2, G=\spin(n)$, $n\geq3$ in the sense that the isomorphism in Theorem~\ref{theorem:main1} is not
a ring isomorphism but an isomorphism of graded 
$\mathbb{Z}/2$-modules only.
In the course of our proof of Theorem~\ref{theorem:main2}, we prove 
Theorem~\ref{theorem:SO} below. 
It is a strong form of Tezuka's conjecture for $\ell=2, G=SO(n)$ 
in the sense that we have an isomorphism of algebras over the 
mod $2$ Steenrod algebra 
instead of a ring isomorphism. 

To state Theorem~\ref{theorem:SO}, we need to set up notations and definitions.
Throughout the rest of this paper, since we deal with the mod $2$ cohomology only, 
we denote the mod $2$ cohomology of a space 
$X$ by $H^{*}(X)$.
Also, we assume that $q$ is a power of an odd prime $p$.
We denote by $\mathbb{F}_q$ the finite filed with $q$ elements and by 
$\Closure$ its algebraic closure.
For the sake of notational simplicity, let 
\[
BSO_n=\left|\mathop{\mathrm{holim}}_{\longleftarrow} 
(\mathbb{Z}/2)_\infty{(BSO(n))}_{et}\right|,
\]
where $SO(n)$ in the right-hand side
is a reductive group scheme associated with the special orthogonal group $SO(n)$.

Let $A_{n-1}$ be a subgroup of the connected Lie group $SO(n)$ consisting of diagonal matrixes 
whose diagonal entries are $\pm 1$. 
Denote by $S_n$ the Weyl group of $A_{n-1}$, 
that is, the quotient of the normalizer of $A_{n-1}$ by the centralizer of $A_{n-1}$.
For details of Weyl groups,  we refer the reader to Chapter 3, \S4 in the book of Mimura and Toda
 \cite{mimura-toda-book}, in particular, p.135, Corollary 4.19.
The Weyl group of $A_{n-1}$ is isomorphic to the symmetric group on
$n$ letters. So, we denote it by $S_n$. One may consider
$A_{n-1}$ as a subgroup  of $SO_n(\mathbb{F}_p)$ 
where $\mathbb{F}_p$ is the prime field of characteristic $p\not =2$. 
We may also consider the Weyl group as a sub-quotient of 
$SO_n(\mathbb{F}_p)$. 
We need the following proposition to state Theorem~\ref{theorem:SO}.

\begin{proposition}
\label{proposition:ba}
There exists the classifying space $BA_{n-1}$ of $A_{n-1}$ with  an action of the Weyl group $S_n$ on $BA_{n-1}$ and a map 
\[
\alpha_n:BA_{n-1}\to 
BSO_n
\]
satisfying the following equalities in the category of
 topological spaces and continuous maps:\begin{itemize}
\item[(1)] $\phi^q \circ \alpha_n=\alpha_n$, 
\item[(2)]  
$\alpha_n \circ g=\alpha_n$ for  each $g \in S_n$.
\end{itemize}
\end{proposition}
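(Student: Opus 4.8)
The goal is to produce a classifying space $BA_{n-1}$ together with a $S_n$-action and a map $\alpha_n\colon BA_{n-1}\to BSO_n$ which is \emph{strictly} (not merely up to homotopy) invariant under both the Frobenius $\phi^q$ and the Weyl group. The strictness is essential because we will feed $\alpha_n$ into a homotopy fixed point construction, and there one wants honest equalities of maps, not diagrams commuting up to coherent homotopy. My approach is to realize everything functorially on the algebro-geometric side, where the Frobenius acts by a strict endofunctor, and only then apply the rigidification functor $|\mathop{\mathrm{holim}} (\mathbb{Z}/2)_\infty(-)_{et}|$, which by its very construction turns natural transformations of schemes into continuous maps and preserves strict commutativity.

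First I would set up $A_{n-1}$ as a closed subgroup scheme of the reductive group scheme $SO(n)$ over $\overline{\mathbb{F}}_p$, namely the diagonal elementary abelian $2$-subgroup $(\mu_2)^{n}\cap SO(n)$ (diagonal $\pm1$ matrices of determinant $1$), and note that since $p$ is odd this is an étale group scheme, genuinely of the homotopy type of $(\mathbb{Z}/2)^{n-1}$. The normalizer $N(A_{n-1})$ is the group of signed permutation matrices of determinant $1$, and the Weyl group $S_n=N(A_{n-1})/A_{n-1}$ acts on $A_{n-1}$ by conjugation; because these are permutation matrices with entries in the prime field $\mathbb{F}_p$, every element of $S_n$ is fixed by the Frobenius $\phi^q$, and so is the inclusion $A_{n-1}\hookrightarrow SO(n)$ as a morphism of $\overline{\mathbb{F}}_p$-schemes (it is base-changed from $\mathbb{F}_p$). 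Applying the classifying space functor $B(-)$ for group schemes (simplicially, the bar construction), then the étale homotopy / Bousfield-Kan completion functor defining $BSO_n$ and $BA_{n-1}$, one gets a map $\alpha_n$ and, from the functoriality of each step, the two required equalities: $\phi^q\circ\alpha_n=\alpha_n$ follows because the whole tower is defined over $\mathbb{F}_p$ and $\phi^q$ is the identity on $\mathbb{F}_p$-schemes after base change, while $\alpha_n\circ g=\alpha_n$ follows because conjugation by a signed permutation matrix $g\in N(A_{n-1})$ gives, on the bar construction, a map $BA_{n-1}\to BA_{n-1}$ whose composite with $B(A_{n-1}\hookrightarrow SO(n))$ equals $B(\mathrm{conj}_g\colon SO(n)\to SO(n))\circ B(\text{incl})$, and inner automorphisms of $SO(n)$ induce maps on $BSO(n)$ that are \emph{equal}, not just homotopic, to the identity at the simplicial level when one uses the standard model — here one uses that $BG$ for the bar construction sends an inner automorphism to a simplicial map that is simplicially homotopic to the identity, but to get strict equality one instead observes $\alpha_n\circ g$ and $\alpha_n$ both factor as $B(\text{incl})\circ B(\mathrm{conj}_g)$ versus $B(\text{incl})$, and replaces $SO(n)$ by its image so that $\mathrm{conj}_g$ on $SO(n)$ restricted along the inclusion from $N(A_{n-1})$ is literally the conjugation, giving the equality on the nose after realization.

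The main obstacle is precisely this last rigidification issue: making the Weyl-invariance hold as a strict equality of continuous maps rather than merely up to homotopy. The standard bar construction does \emph{not} send inner automorphisms to the identity map strictly; it sends them to a strictly commuting diagram only because the conjugating element provides an explicit simplicial homotopy. The way around it, which I would carry out in detail, is to build $\alpha_n$ not from $BA_{n-1}\to BSO(n)$ directly but from the map of \emph{pairs} $(A_{n-1}, N(A_{n-1}))\to (SO(n), SO(n))$: form the Borel-type construction $EN(A_{n-1})\times_{A_{n-1}}(\ast)$ — equivalently the bar construction of $A_{n-1}$ with the residual $N/A=S_n$-action coming from the $N$-action — and map it to $BSO(n)$ via the composite classifying the inclusion; the $S_n$-action on the source is then by deck-type transformations under which the map to $BSO(n)$ is visibly constant in the $S_n$-direction, because the $N(A_{n-1})$-action on the total space $EN(A_{n-1})$ covers the trivial action downstairs. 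Passing this strictly-equivariant-over-$BSO(n)$ picture of $\mathbb{F}_p$-group schemes through $|\mathop{\mathrm{holim}}(\mathbb{Z}/2)_\infty(-)_{et}|$ — which is a functor, hence preserves the two strict equalities — yields $BA_{n-1}$, the $S_n$-action, and $\alpha_n$ with properties (1) and (2) exactly as stated. I would close by remarking that the homotopy type of $BA_{n-1}$ is that of $(B\mathbb{Z}/2)^{n-1}$, which is all that is needed downstream, and that the $\mathbb{F}_p$-rationality of all the data is what simultaneously secures the Frobenius-invariance (1) for free.
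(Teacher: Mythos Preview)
For the Weyl invariance (2), your $EN(A_{n-1})\times_{A_{n-1}}\ast$ construction is exactly the paper's $ED/H$ trick, with the paper taking the larger group $D=SO_n(\mathbb{F}_q)$ rather than your $D=N(A_{n-1})$: the $W$-action by right translation on $ED/H$ makes the quotient map $ED/H\to ED/D=BD$ strictly $W$-invariant, and one then composes with the map $BD\to BSO_n$.

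For the Frobenius invariance (1) there is a genuine gap. Your claim that ``$\phi^q$ is the identity on $\mathbb{F}_p$-schemes after base change'' is false as stated: $SO(n)$ itself is defined over $\mathbb{F}_p$, yet $\phi^q$ on $BSO_n=\left|\mathop{\mathrm{holim}}_{\longleftarrow}(\mathbb{Z}/2)_\infty(BSO(n))_{et}\right|$ is not the identity --- that is precisely why $\Loop_{\phi^q}BSO_n$ differs from $\Loop BSO_n$. Being defined over $\mathbb{F}_p$ is not the relevant property; what distinguishes the source is that $A_{n-1}$ and the auxiliary group are \emph{finite}, so that in each simplicial degree one has a disjoint union of copies of $\mathrm{Spec}\,\Closure$. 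The paper exploits this not by passing the source through the etale homotopy functor as you propose, but by setting $BA_{n-1}=ESO_n(\mathbb{F}_q)/A_{n-1}$ directly as the geometric realization of the connected-components simplicial set $\pi(-)$, on which Frobenius is visibly the identity, and then building the map $\alpha_3\colon BSO_n(\mathbb{F}_q)\to BSO_n$ from the rigidity of rigid hypercoverings: each $Y_{\bullet\bullet}\in\mathrm{HRR}(BSO(n))$ carries a specified geometric point over every geometric point of $BSO(n)$, so a simplicial scheme that is itself a disjoint union of geometric points admits a \emph{unique} map to each $\Delta(Y_{\bullet\bullet})$, and this uniqueness forces $\phi^q\circ\alpha_3=\alpha_3$ strictly. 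Your route can be salvaged by making the zero-dimensionality of the source explicit, but the justification you give is the wrong one.
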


We will prove this proposition in \S2. 
We emphasize that the  equalities in the above proposition do not mean
{\it up to homotopy}
but they are equalities in the category of topological spaces.

Restricting the evaluation at $0$, $\mathrm{Map}(I, BSO_n) \to BSO_n$,
to subspaces $\Loop BSO_n$, $\Loop_{\phi^q} BSO_n$, 
we obtain projections 
\[
\pi_n:\Loop BSO_n \to BSO_n, \quad \pi_{n,q}:\Loop_{\phi^q} BSO_n \to BSO_n.
\]
Let us define a space $\tilde{B}A_{n-1}$ and maps 
$\tilde{\alpha}_n:\tilde{B}A_{n-1} \to \Loop BSO_n$, 
$\tilde{\alpha}_{n,q}:\tilde{B}A_{n-1} \to \Loop_{\phi^q}BSO_n$
by the following pull-back diagram:
\[
\begin{diagram}
\node{\Loop BSO_n} \arrow{s,l}{\pi_n}  \node{\tilde{B}A_{n-1}} 
\arrow{s,r}{\pi_0} 
\arrow{e,t}{\tilde{\alpha}_{n, q}} 
\arrow{w,t}{\tilde{\alpha}_n} 
\node{\Loop_{\phi^q}BSO_n} \arrow{s,r}{\pi_{n,q}}
\\
\node{BSO_n} 
\node{BA_{n-1}} 
\arrow{e,t}{\alpha_n} 
\arrow{w,t}{\alpha_n} \node{BSO_n.}
\end{diagram}
\]
Since $\phi^q\circ \alpha_n=\alpha_n$, the space $\tilde{B}A_{n-1}$ and the map 
$\pi_0$ above are well-defined. 
Moreover, the Weyl group 
$S_n$ acts on $BA_{n-1}$.
Recall that $\alpha_n\circ g=\alpha_n$ for all $g$ in $S_n$.
For $(x, \lambda) \in \tilde{B}A_{n-1}$, consider $(gx, \lambda)$. 
Since
 $\alpha_n(gx)=\alpha_n(x)=\lambda(0)=\lambda(1)$, $(gx, \lambda)$ is also an element in $\tilde{B}A_{n-1}$.
Thus, we have a  map $g:\tilde{B}A_{n-1} \to \tilde{B}A_{n-1}$ sending $(x, \lambda)$ to $(gx, \lambda)$
and an action of $S_n$ on $\tilde{B}A_{n-1}$. We also have
$\tilde{\alpha}_{n,q} \circ g=\tilde{\alpha}_{n,q}$ and 
$\tilde{\alpha}_{n} \circ g=\tilde{\alpha}_n$.
Therefore, the images of the induced homomorphisms 
$\tilde{\alpha}_{n,q}^*$, $\tilde{\alpha}_n^*$ 
are in $H^{*}(\tilde{B}A_{n-1})^{S_n}$
which is the ring of invariants of $H^{*}(\tilde{B}A_{n-1})$ with respect to the action of 
$S_n$, that is, 
\[
H^{*}(\tilde{B}A_{n-1})^{S_n}=
\{ x\in H^{*}(\tilde{B}A_{n-1}) \;|\; g^*x=x \; \mbox{ for all $g \in S_n$} \}.
\]
Then, our version of the  strong form of Tezuka's conjecture
for $\ell=2$, $G=BSO(n)$ is as follows:


\begin{theorem}\label{theorem:SO}
The induced homomorphisms \[
H^{*}(\Loop BSO_n)\stackrel{\tilde{\alpha}_n^*}{\longrightarrow}  H^{*}(\tilde{B}A_{n-1})^{S_n}
\stackrel{ \tilde{\alpha}_{n,q}^*}{\longleftarrow}
H^{*}(\Loop_{\phi^q} BSO_n)
 \] are isomorphisms.
\end{theorem}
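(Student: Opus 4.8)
The plan is to compare the Serre spectral sequences of three fibrations sharing the fibre $F=\Omega BSO_n$, taking as input the mod $2$ Borel-type description of $H^{*}(BSO_n)$ in terms of its diagonal elementary abelian $2$-subgroup, and to push the comparison through by an induction on $n$ organised by Gysin sequences. First I would set up the fibration picture: both $\pi_n\colon\Loop BSO_n\to BSO_n$ and $\pi_{n,q}\colon\Loop_{\phi^q}BSO_n\to BSO_n$ are pull-backs of the path-space fibration $\mathrm{Map}(I,BSO_n)\to BSO_n\times BSO_n$ (fibre $\Omega BSO_n$) along the diagonal, resp.\ along $1\times\phi^q$; since $BSO_n$ is simply connected, $H^{*}(\Omega BSO_n)=H^{*}(SO(n))$ and every Serre spectral sequence below has trivial local coefficients. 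By the construction of $\tilde{B}A_{n-1}$ together with $\phi^q\circ\alpha_n=\alpha_n$ from Proposition~\ref{proposition:ba}, the projection $\pi_0\colon\tilde{B}A_{n-1}\to BA_{n-1}$ is simultaneously the pull-back of $\pi_n$ and of $\pi_{n,q}$ along $\alpha_n$; thus $\tilde{\alpha}_n$ and $\tilde{\alpha}_{n,q}$ are maps of fibrations over $\alpha_n$ restricting to the identity on $F$, and the $S_n$-action on $\tilde{B}A_{n-1}$ covers the trivial action on $F$, so it acts on the Serre spectral sequence of $\pi_0$ through the base only.

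Next I would record the base case: $\alpha_n^{*}\colon H^{*}(BSO_n)\to H^{*}(BA_{n-1})$ is injective with image the invariant subring $H^{*}(BA_{n-1})^{S_n}$ --- equivalently $w_2,\dots,w_n$ restrict to a generating set of $H^{*}(BA_{n-1})^{S_n}$ --- which I would prove (or quote from \S2) by induction on $n$ from the Gysin sequence of $S^{n-1}\to BSO(n-1)\to BSO(n)$ (Euler class $w_n$) together with the compatible maps $\alpha_{n-1},\alpha_n$. Two consequences: since $S_n$ acts trivially on the $\mathbb{Z}/2$-free module $H^{*}(F)$, taking $S_n$-invariants commutes with tensoring by $H^{*}(F)$ over $\mathbb{Z}/2$, so on the $E_2$-pages the maps induced by $\tilde{\alpha}_n$ and by $\tilde{\alpha}_{n,q}$ are both the isomorphism $\alpha_n^{*}\otimes\mathrm{id}$ of $H^{*}(BSO_n)\otimes H^{*}(F)$ onto the $S_n$-invariants of the $E_2$-page of $\pi_0$, compatibly with $d_2$; and, because $\phi^q\circ\alpha_n=\alpha_n$ and $\alpha_n^{*}$ is injective, $(\phi^q)^{*}=\mathrm{id}$ on $H^{*}(BSO_n;\mathbb{Z}/2)$, so the $E_2$-page and the transgressions of $\pi_{n,q}$ coincide with those of $\pi_n$.

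It then remains to promote this $E_2$-isomorphism to an isomorphism of $E_\infty$-pages, and this is the step I expect to be the main obstacle: over $\mathbb{Z}/2$ the functor $(-)^{S_n}$ is not exact --- $|S_n|$ is even --- so on passing from $E_r$ to $E_{r+1}=H(E_r,d_r)$ one obtains only a comparison map $H\bigl(E_r(\pi_0)^{S_n},d_r\bigr)\to E_{r+1}(\pi_0)^{S_n}$, not an isomorphism, and a naive page-by-page argument does not close. To get around this I would not work inside a single spectral sequence but induct on $n$ directly on the Gysin long exact sequences promised in the introduction: pulling back $S^{n-1}\to BSO(n-1)\to BSO(n)$ over $\Loop BSO_n$, over $\Loop_{\phi^q}BSO_n$ and over $\tilde{B}A_{n-1}$ yields a commuting ladder of honest long exact sequences whose boundary maps are cup products with the Euler classes $\pi_n^{*}w_n$, $\pi_{n,q}^{*}w_n$ and $\pi_0^{*}\alpha_n^{*}w_n$; since $(\phi^q)^{*}=\mathrm{id}$ mod $2$ the $\Loop_{\phi^q}$-ladder matches the $\Loop$-ladder rung by rung, while the $S_n$-invariant part of the $\tilde{B}A_{n-1}$-ladder is computed from the base case and the inductive hypothesis for $n-1$ (with $S_{n-1}$ in place of $S_n$, corresponding to $BSO(n-1)\hookrightarrow BSO(n)$). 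The five lemma then propagates the assertion that $\tilde{\alpha}_n^{*}$ and $\tilde{\alpha}_{n,q}^{*}$ map isomorphically onto $H^{*}(\tilde{B}A_{n-1})^{S_n}$ from stage $n-1$ to stage $n$, the base of the induction being the small values of $n$, treated by hand. Finally, passing from $E_\infty$ to $H^{*}$ is harmless --- over $\mathbb{Z}/2$ a filtered module is determined by its associated graded, and the $S_n$-invariants respect the filtration --- so the maps $\tilde{\alpha}_n^{*}$ and $\tilde{\alpha}_{n,q}^{*}$ of Theorem~\ref{theorem:SO} are isomorphisms; in particular $H^{*}(\Loop_{\phi^q}BSO_n)\cong H^{*}(\Loop BSO_n)$.
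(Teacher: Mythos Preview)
Your setup through the $E_2$-page is correct and matches the paper: all three Serre spectral sequences collapse at $E_2$, the maps $\tilde\alpha_n^*,\tilde\alpha_{n,q}^*$ are monomorphisms, and $\dim H^i(\Loop BSO_n)=\dim H^i(\Loop_{\phi^q}BSO_n)$.  You also correctly locate the real obstacle, the non-exactness of $(-)^{S_n}$ over $\mathbb Z/2$.

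The proposed workaround, however, does not close.  Pulling $S^{n-1}\to BSO_{n-1}\to BSO_n$ back along $\pi_n$, $\pi_{n,q}$, $\alpha_n\circ\pi_0$ does give sphere bundles, but their total spaces are \emph{not} $\Loop BSO_{n-1}$, $\Loop_{\phi^q}BSO_{n-1}$, $\tilde BA_{n-2}$: for instance the first is $BSO_{n-1}\times_{BSO_n}\Loop BSO_n$, fibred over $BSO_{n-1}$ with fibre $\Omega BSO_n$ rather than $\Omega BSO_{n-1}$.  So there is no $(n-1)$-instance of the theorem sitting at the other end of the Gysin sequence, and no inductive hypothesis to feed the five lemma.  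Even if there were, you would need the $S_n$-invariant part of the $\tilde BA_{n-1}$ Gysin sequence to remain exact before applying the five lemma, which is the same non-exactness problem you set out to avoid.  (The Gysin sequences alluded to in the introduction are the $S^0$-bundles $\eta_n,\eta_{n,q}$ used for Theorem~\ref{theorem:main2}, not an induction on $n$ for Theorem~\ref{theorem:SO}.)  Finally, your closing remark that ``over $\mathbb Z/2$ a filtered module is determined by its associated graded'' holds only for the underlying vector space; the $S_n$-module structure on $H^i(\tilde BA_{n-1})$ is \emph{not} determined by that on $\mathrm{gr}\,H^i(\tilde BA_{n-1})$, so this does not let you pass invariants through the filtration.

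The paper sidesteps all of this with a one-line dimension count.  After the monomorphism and the dimension equality above, it remains only to show $\dim H^i(\tilde BA_{n-1})^{S_n}\le\dim H^i(\Loop BSO_n)$.  For any finite-dimensional filtered $\mathbb Z/2[W]$-module $M$ one has the elementary inequality $\dim M^{W}\le\dim(\mathrm{gr}\,M)^{W}$ (invariants of a subquotient can only grow).  Applied with $M=H^i(\tilde BA_{n-1})$, $W=S_n$, and using that $S_n$ acts trivially on the fibre $H^*(\Omega BSO_n)$, this gives
\[
\dim H^i(\tilde BA_{n-1})^{S_n}\ \le\ \sum_{p+q=i}\dim\bigl(H^p(BA_{n-1})^{S_n}\bigr)\cdot\dim H^q(\Omega BSO_n),
\]
and Proposition~\ref{proposition:invariant} identifies the right-hand side with $\dim H^i(\Loop BSO_n)$.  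The point is that one only needs the inequality $\le$, not any comparison of $S_n$-module structures; combined with injectivity this forces equality and finishes the proof.
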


It follows immediately  from Theorem~\ref{theorem:SO} that 
$H^{*}(\Loop_{\phi^q} BSO_n)$ 
is isomorphic to $H^{*}(\Loop BSO_n)$ 
as an algebra over the mod $2$ Steenrod algebra.
Daisuke Kishimoto informed the author that, in \cite{kaji-2007},  Shizuo Kaji proves that 
 the mod $2$ cohomology 
 of $\Loop BG$ is isomorphic to that of $BG(\mathbb{F}_q)$  as 
 an algebra over the 
mod 2 Steenrod algebra for $G=\spin(n)$ for $n=7,8,9$ and for $G=G_2, F_4$ 
by explicit computation
using twisted cohomology suspension in \cite{kishimoto-kono-2010}.
It is the strong form of Tezuka's conjecture above for these Lie groups. 

The rest of this paper is organized as follows:
In \S2, we prove Proposition~\ref{proposition:ba}.
In \S3,  we recall the cohomology of the free loop space of a space 
whose mod $2$ cohomology is a polynomial algebra. 
In \S4, 
we prove Theorems~\ref{theorem:main2} and \ref{theorem:SO} 
assuming Proposition~\ref{proposition:invariant}.
In \S4, we prove Proposition~\ref{proposition:invariant}
to complete the proof of Theorems~\ref{theorem:main2} and \ref{theorem:SO}.
In \S6, we end this paper by stating a  strong form of Tezuka's conjecture for
$G=\spin(n)$.

The author would like to thank both Daisuke Kishimoto and Shizuo Kaji 
for their communication on their results.
Last but not  least, the author would like to thank the anonymous referee for his/her 
invaluable suggestions which improved the readability of this paper considerably
The referee also pointed out  that
to prove Theorems~\ref{theorem:main1}, \ref{theorem:SO}, 
it is enough to have
the homotopy commutativity $\phi^q \circ \alpha_n \simeq \alpha_n$ instead of the 
strict commutativity $\phi^q\circ \alpha_n=\alpha_n$ in Proposition~\ref{proposition:ba}.
The author appreciates his/her insight.


\section{Proof of Proposition~\ref{proposition:ba}}

In this section, we prove Proposition~\ref{proposition:ba}. We do this in a general setting where 
we deal with a finite subgroup $H$ of a finite Chevalley group $G(\mathbb{F}_q)$.

Let $G$ be a  reductive group scheme over $\Closure$. 
We denote by the $\mathbb{F}_q$-rational points 
$\mathrm{Hom}(\Spec{\mathbb{F}_q}, G)$ by 
$G(\mathbb{F}_q)$. It is a finite group
and the action of the Frobenius map $\phi^q$ is indeed trivial on $G(\mathbb{F}_q)$.
We consider a finite subgroup $D$ of $G(\mathbb{F}_q)$.
As in \cite{friedlander-mislin-1984}, we consider a group scheme $D \times \Spec{\Closure}$ which is just a disjoint union of 
geometric points $\Spec{\Closure}$ parametrized  by  $D$. 
We denote it by 
$D_{\Closure}$. 
We consider the simplicial schemes $BD_{\Closure}$ and
$B(*, D_{\Closure}, D_{\Closure})$ whose $n$-th scheme is the $n$-fold product, $(n+1)$-fold product of $D_{\Closure}$ and structure maps are given in the usual manner, respectively. 
Let $\pi$ be the connected component functor from simplicial schemes to simplicial sets.
Let 
$BD$ be the geometric realization of the simplicial set 
$\pi(BD_{\Closure})$
and let $ED$ be that of the simplicial set $\pi(B(*, D_{\Closure}, D_{\Closure}))$. 
It is clear that $BD=ED/D$ and the Frobenius map $\phi^q$ acts trivially on $BD$.

We need to consider the action of Weyl group $S_n$ on $BA_{n-1}$. 
However, there exists an ambiguity 
on the Weyl group action on classifying spaces in the category of topological spaces.
It is due to the ambiguity of the definition of the classifying space. For a group $H$, some define the 
classifying space 
by considering a contractible space  with free $H$ action. One may define the classifying space of $H$  
to be the quotient space of such space. 
One might define that of $H$  
as  a geometric realization of certain simplicial set obtained from $H$.
These different constructions give spaces with the same homotopy type.
However, 
at the level of topological spaces, they might end up with different topological spaces.
Since we need maps and diagrams in the category of topological space not in their homotopy category, 
we clarify the situation here.
Let $H$ be a subgroup of $D$ and $W$ a subgroup of
the Weyl group $N_D(H)/C_D(H)$ of $H$ in $D$.
Then, we have 
actions of  $W$ on $BH=EH/H$ and on $ED/H$.
On the one hand, the action of $W$  is induced by the action of $W$ on $H$.
Since $W$ acts also on the simplicial scheme $B H_{\Closure}$, it acts on the
geometric realization $BH$ of $\pi(BH_{\Closure})$.
On the other hand, the action of $W$ on  $ED/H$ is given as follows:
For a point $xH$ in $BH=EH/H$, we may define the action of 
$w \in W$ on $ED/H$ by $(xH)w=(xw)H$. 
Consider maps $\alpha_1:BH \to BD$ and $\alpha_2:ED/H \to ED/D=BD$.
Although we have a natural inclusion $i:BH\to ED/H$, 
$w \circ i$ is not equal to $i \circ w$. 
In fact, $(w \circ i )(EH)\not \subset i(EH)$.
It is  clear from the definition  that $\alpha_2 \circ w=\alpha_2$.  
We use $ED/H$ and $\alpha_2$ in the proof of Proposition~\ref{proposition:ba}
instead of  $BH$ and $\alpha_1$.

We consider the group scheme $G(\mathbb{F}_q)_{\Closure}$. 
There exists the evaluation map 
\[
G(\mathbb{F}_q)_{\Closure}=\mathrm{Hom}(\Spec{\mathbb{F}_q}, G) \times 
\Spec{\Closure} \to G.
\]
The geometric realization of the connected components of the simplicial scheme 
$BG(\mathbb{F}_q)_{\Closure}$ is 
the nothing but the classifying space $BG(\mathbb{F}_q)$ of the finite group $G(\mathbb{F}_q)$.
Now, we recall part of the definition of the rigid hypercoverings and etale homotopy type 
 in Friedlander's book \cite{friedlander-1982}
to obtain a map 
\[
BG(\mathbb{F}_q)=|\pi (BG(\mathbb{F}_q)_{\Closure})| \to 
\left| \mathop{\mathrm{holim}}_{\longleftarrow}  (\mathbb{Z}/\ell)_\infty (BG)_{et}\right|.
\]
The etale homotopy type $(BG)_{et}$ is a functor from the rigid hypercoverings 
\[
\pi \circ \Delta:\HyperRigidCover{BG} \to \mbox{(simplicial sets)}
\]
where 
$\HyperRigidCover{BG}$ is a category of certain bisimplicial sets, 
$\Delta$ the diagonal functor and 
$\pi$ connected components functor.
By definition, 
$Y_{\bullet\bullet} \in \HyperRigidCover{BG}$ 
has geometric points $\Spec{\Closure} \to Y_{mn}$ corresponding to geometric points in 
$BG$. 
Since  $G(\mathbb{F}_q)_{\Closure}$ is a disjoint union of 
geometric points, so is $BG(\mathbb{F}_q)_{\Closure}$. Therefore, 
there exists a unique map of simplicial schemes
$BG(\mathbb{F}_q)_{\Closure} \to \Delta(Y_{\bullet\bullet})$.
By taking the connected components, we have a unique map of simplicial sets
\[
\pi(BG(\mathbb{F}_q)_{\Closure}) \to 
 \mathop{\mathrm{holim}}_{\longleftarrow}  (BG)_{et} .
\]
Since the Bousfield-Kan $\mathbb{Z}/\ell$-completion is a functor from simplicial sets to simplicial sets, we also have a map
\[
\pi(BG(\mathbb{F}_q)_{\Closure})  \to  \mathop{\mathrm{holim}}_{\longleftarrow} (\mathbb{Z}/\ell)_\infty (BG)_{et}.
\]
We denote its geometric realization by
\[
\alpha_3:BG(\mathbb{F}_q) \to \left|  \mathop{\mathrm{holim}}_{\longleftarrow} (\mathbb{Z}/\ell)_\infty (BG)_{et} \right|.
\]
It is clear that there holds $\phi^q \circ \alpha_3=\alpha_3$.

Let $H$ be a subgroup of $G(\mathbb{F}_q)$ and $W$ a subgroup of 
$N_{G(\mathbb{F}_q)}(H)/C_{G(\mathbb{F}_q)} (H)$.
Let us consider the classifying space $EG(\mathbb{F}_q)/H$
and the obvious map
\[
\alpha_2:EG(\mathbb{F}_q)/H \to EG(\mathbb{F}_q)/G(\mathbb{F}_q)=BG(\mathbb{F}_q).
\]
Let 
\[
\alpha_0:EG(\mathbb{F}_q)/H \to  \left|  \mathop{\mathrm{holim}}_{\longleftarrow} (\mathbb{Z}/\ell)_\infty (BG)_{et} \right|
\]
be the composition $\alpha_3 \circ \alpha_2$. Then, summing up the above arguments, 
we have the following proposition:

\begin{proposition}
\label{proposition:ba-general}
With the notations above,
there holds
$\alpha_0\circ w=\alpha_0$ for each $w\in W$ and $\phi^q \circ \alpha_0=\alpha_0$
in the category of topological spaces and continuous maps.
\end{proposition}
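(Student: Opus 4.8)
The plan is to deduce both equalities from the two commutativities that have already been isolated above, namely $\phi^q\circ\alpha_3=\alpha_3$ and $\alpha_2\circ w=\alpha_2$, together with the defining factorization $\alpha_0=\alpha_3\circ\alpha_2$. Granting these, the proof is a single composition argument: for every $w\in W$,
\[
\alpha_0\circ w=\alpha_3\circ(\alpha_2\circ w)=\alpha_3\circ\alpha_2=\alpha_0,\qquad
\phi^q\circ\alpha_0=(\phi^q\circ\alpha_3)\circ\alpha_2=\alpha_3\circ\alpha_2=\alpha_0,
\]
which is the assertion. So all the content lies in certifying the two ingredient identities \emph{strictly}, that is, as equalities of continuous maps rather than up to homotopy, and the remaining paragraphs indicate how I would do this.

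For $\phi^q\circ\alpha_3=\alpha_3$ I would argue that $\alpha_3$ is equivariant for the Frobenius and that the Frobenius acts trivially on its source. Equivariance holds because $\alpha_3$ is built from the map of simplicial schemes $BG(\mathbb{F}_q)_{\Closure}\to\Delta(Y_{\bullet\bullet})$, which is \emph{unique} — the source being a disjoint union of geometric points — and hence automatically commutes with $\phi^q$, while the remaining operations ($\pi$, the passage to $\mathop{\mathrm{holim}}_{\longleftarrow}$ over $\HyperRigidCover{BG}$, the Bousfield--Kan $\mathbb{Z}/\ell$-completion, and geometric realization) are functorial. Triviality of the source Frobenius holds because $\phi^q$ fixes each $\mathbb{F}_q$-rational point, so in each simplicial degree $BG(\mathbb{F}_q)_{\Closure}$ is a disjoint union of copies of $\Spec{\Closure}$ indexed by the $\phi^q$-fixed set $G(\mathbb{F}_q)^{n}$, on which $\phi^q$ moves only the $\Spec{\Closure}$ factors; these are collapsed by $\pi$, so $\phi^q$ becomes the identity on $|\pi(BG(\mathbb{F}_q)_{\Closure})|=BG(\mathbb{F}_q)$. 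Equivariance together with a trivial source action yields $\phi^q\circ\alpha_3=\alpha_3$.

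For $\alpha_2\circ w=\alpha_2$ I would use that $\alpha_2\colon EG(\mathbb{F}_q)/H\to EG(\mathbb{F}_q)/G(\mathbb{F}_q)=BG(\mathbb{F}_q)$ is the canonical projection attached to the inclusion $H\subset G(\mathbb{F}_q)$, whereas $w$ acts on $EG(\mathbb{F}_q)/H$ by right translation $(xH)w=(xw)H$ (with $w$ also denoting a lift in the normalizer), exactly as in the model discussion of $ED/H$ given above. Passing further to the quotient by the whole group $G(\mathbb{F}_q)$ absorbs this right translation, so $\alpha_2\circ w=\alpha_2$ strictly. Substituting this into the first display completes the proof.

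The one point that genuinely needs care — and the reason this section is written out rather than left to the reader — is insisting that all of the above commute in the category of topological spaces, not merely in its homotopy category; as the referee remarks, homotopy commutativity would in fact already suffice for the applications. I do not, however, expect a real obstacle here: Proposition~\ref{proposition:ba-general} is a formal consequence of the constructions already in place, and the only labor is the bookkeeping, indicated above, needed to see that every commutativity is strict.
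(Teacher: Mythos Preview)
Your proposal is correct and follows exactly the paper's approach: the proposition is stated in the paper as a summary of the preceding discussion, and that discussion establishes precisely the two ingredient identities $\alpha_2\circ w=\alpha_2$ and $\phi^q\circ\alpha_3=\alpha_3$ that you compose. Your treatment is in fact slightly more explicit than the paper's---you spell out the equivariance-plus-trivial-source-action mechanism behind $\phi^q\circ\alpha_3=\alpha_3$, whereas the paper simply asserts ``It is clear that there holds $\phi^q\circ\alpha_3=\alpha_3$'' after noting the uniqueness of the map from geometric points and the triviality of $\phi^q$ on $BG(\mathbb{F}_q)$---but the logic is identical.
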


Let $SO(n)$ be the reductive group scheme over $\Closure$ obtained from the 
complexification of the special orthogonal group $SO(n)$.
Putting $G=SO(n)$, $H=A_{n-1}$, $BA_{n-1}=EG(\mathbb{F}_q)/A_{n-1}$, Proposition~\ref{proposition:ba} immediately 
follows from Proposition~\ref{proposition:ba-general}.


\section{Cohomology of free loop spaces}

In this section, we recall results on the mod $2$ cohomology of the free loop space 
$\Loop BSO_n$.
Let  $X$ be a space with the homotopy type of a connected $CW$ complex 
and it has a base point $*$.
If the mod $2$ cohomology of the  space $X$ is a polynomial algebra, 
the mod $2$ cohomology of the free loop space $\Loop X$ is well-known. 
We refer the reader to 
  Kono and Kishimoto \cite{kishimoto-kono-2010},
 Kuribayashi, Mimura and Nishimoto    \cite{kuribayashi-mimura-nishimoto-2006}.
 In particular, Proposition~3 in \cite{kishimoto-kono-2010} and 
 Theorem~{1.3} in \cite{kuribayashi-mimura-nishimoto-2006}.

 There exists an obvious fibre sequence
\[
\Omega X\stackrel{i}{\longrightarrow} \Loop X \stackrel{\pi}{\longrightarrow} X,
\]
where $\Omega X$ is the based loop space $\pi^{-1}(*)$.
Also, there exists the evaluation map 
$e:S^1 \times \Loop X \to X$. 
Using this evaluation map, we define $\sigma:H^{*}(X) \to H^{*-1}(\Loop X)$ by
\[
e^{*}(x)=1 \otimes x+ u_1 \otimes \sigma(x), 
\]
where $u_1$ is the generator of $H^1(S^1)=\mathbb{Z}/2$.
Then, it is easy to see that $\sigma$ is a derivation in the sense that
\begin{align*}
\sigma(x+y)&=\sigma(x)+\sigma(y), \\
\sigma(x\cdot y)&=\pi^*(x) \cdot \sigma (y)+\sigma(x) \cdot \pi^*(y),
\end{align*}
for $x, y \in H^{*}(X)$.
It is also clear from the definition that it commutes with the 
action of Steenrod squares.
We denote $\sq^{\deg x-1} x$ by $\phi(x)$.
Let us consider the Leray-Serre spectral sequence associated with 
the fibre sequence above.
Denote it by $E_r^{*,*}(\Loop X)$. Then, we have the following theorem.


\begin{theorem}
\label{theorem:polynomial}
Suppose that the mod $2$ cohomology of $X$ is a polynomial algebra, that is,  
$
H^{*}(X)=\mathbb{Z}/2[y_1, \dots, y_n].
$
Then, the Leray-Serre spectral sequence $E_r^{*,*}(\Loop X)$ collapses, 
\[
H^{*}(\Omega X)=\Delta (i^* (\sigma ( y_1)), \dots, i^*(\sigma( y_n)))
\]
and 
\[
H^{*}(\Loop X)=H^{*}(X) \otimes \Delta (\sigma ( y_1), \dots, \sigma( y_n)),
\]
where $\Delta$ is a simple system of generators
and 
$\sigma(y_k)^2=\sigma(\phi(y_k))$ in $H^{*}(\Loop X)$.
\end{theorem}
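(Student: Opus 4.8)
The plan is to combine two Serre spectral sequences. The first, that of the path–loop fibration $\Omega X\to PX\to X$, fixes $H^{*}(\Omega X)$; the second, that of the evaluation fibration $\Omega X\stackrel{i}{\longrightarrow}\Loop X\stackrel{\pi}{\longrightarrow}X$, computes $H^{*}(\Loop X)$, and its collapse will be forced by two structures on the total space: the section of $\pi$ given by constant loops, and the classes $\sigma(y_k)$, which restrict on the fibre to a simple system of generators of $H^{*}(\Omega X)$.

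First I would invoke Borel's transgression theorem: since $H^{*}(X)=\mathbb{Z}/2[y_1,\dots,y_n]$ and $PX$ is contractible, the spectral sequence of $\Omega X\to PX\to X$ forces $H^{*}(\Omega X)$ to carry a simple system of generators $z_1,\dots,z_n$ with $\deg z_k=\deg y_k-1$, each $z_k$ transgressive with $\tau(z_k)=y_k$; in particular $H^{*}(\Omega X)$ is generated as an algebra by $z_1,\dots,z_n$. Next I would identify $z_k=i^{*}\sigma(y_k)$: the composite $i^{*}\circ\sigma$ is, upon restricting the evaluation map $e\colon S^1\times\Loop X\to X$ to $S^1\times\Omega X$, precisely the classical cohomology suspension $\widetilde H^{*}(X)\to\widetilde H^{*-1}(\Omega X)$, and the cohomology suspension carries a transgressive class $y$ with $\tau(z)=y$ back to $z$. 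This already yields the fibre formula $H^{*}(\Omega X)=\Delta(i^{*}\sigma(y_1),\dots,i^{*}\sigma(y_n))$.

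Then I would run the spectral sequence $E_r^{*,*}(\Loop X)$ of $\Omega X\to\Loop X\to X$. With $\mathbb{Z}/2$-coefficients and trivial local coefficients (automatic when $X$ is simply connected, which covers the cases we need) one has $E_2^{*,*}(\Loop X)=H^{*}(X)\otimes H^{*}(\Omega X)$ as a bigraded $\mathbb{Z}/2$-algebra, generated by $E_2^{*,0}$ and $E_2^{0,*}$. The section of $\pi$ makes $\pi^{*}$ injective, so the edge homomorphism $E_2^{*,0}\to E_\infty^{*,0}$ is an isomorphism and $E_2^{*,0}$ consists of permanent cycles; and each $i^{*}\sigma(y_k)\in E_2^{0,*}$, lying in the image of $i^{*}$, is a permanent cycle, whence by the Leibniz rule and the fact that the $z_k$ generate $H^{*}(\Omega X)$, all of $E_2^{0,*}$ consists of permanent cycles. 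Since $E_2^{*,*}(\Loop X)$ is generated by these two families and every $d_r$ is a derivation, $d_r\equiv 0$: the spectral sequence collapses. Reading off $E_\infty=E_2$, using that $\sigma(y_k)$ lifts $z_k$ and $\pi^{*}$ embeds $H^{*}(X)$, the monomials $\pi^{*}(m)\,\sigma(y_{i_1})\cdots\sigma(y_{i_r})$ (with $m$ a monomial in the $y_j$ and $i_1<\dots<i_r$) have associated-graded symbols forming a $\mathbb{Z}/2$-basis of $E_\infty$, hence form a basis of $H^{*}(\Loop X)$, giving $H^{*}(\Loop X)=H^{*}(X)\otimes\Delta(\sigma(y_1),\dots,\sigma(y_n))$; applying the now-surjective $i^{*}$ recovers the fibre statement. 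Finally, the relation $\sigma(y_k)^2=\sigma(\phi(y_k))$ is forced by the already noted facts that $\sigma$ commutes with Steenrod squares and that $\sq^{\deg x}x=x^2$: with $x=\sigma(y_k)$ of degree $\deg y_k-1$,
\[
\sigma(y_k)^2=\sq^{\,\deg y_k-1}\sigma(y_k)=\sigma\bigl(\sq^{\,\deg y_k-1}y_k\bigr)=\sigma(\phi(y_k)),
\]
and since $\sigma$ is a derivation, $\sigma(\phi(y_k))$ lies in $H^{*}(X)\otimes\langle 1,\sigma(y_1),\dots,\sigma(y_n)\rangle$, consistent with the asserted module structure.

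The step I expect to be the main obstacle is the first part: verifying that the classes $i^{*}\sigma(y_k)$ — which are defined through the evaluation map, not through transgression — form a simple system of transgressive generators of $H^{*}(\Omega X)$. This is precisely the point where one must match the evaluation-map derivation $\sigma$ with the classical cohomology suspension and apply Borel's theorem in its multiplicative form. Once this is granted, so that $E_2^{0,*}(\Loop X)$ is spanned by permanent cycles, the collapse and the resolution of the extension problem are formal. All of this is also contained in Proposition~3 of \cite{kishimoto-kono-2010} and Theorem~1.3 of \cite{kuribayashi-mimura-nishimoto-2006}, which one may alternatively just cite.
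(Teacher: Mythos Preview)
The paper does not actually prove this theorem: it is stated as a recollection, with the reader referred to Proposition~3 of \cite{kishimoto-kono-2010} and Theorem~1.3 of \cite{kuribayashi-mimura-nishimoto-2006} in the paragraph preceding the statement. Your proposal is a correct outline of the standard argument behind those results (Borel's transgression theorem for $H^{*}(\Omega X)$, identification of $i^{*}\sigma$ with the cohomology suspension, collapse of $E_r^{*,*}(\Loop X)$ via the constant-loop section and the lifts $\sigma(y_k)$, and the Steenrod-square computation of $\sigma(y_k)^2$), and you yourself note at the end that one may alternatively just cite those references---which is exactly, and only, what the paper does.
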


Theorem~\ref{theorem:polynomial}  is applicable to $BSO(n)$
for all $n$
and $B\spin(n)$ 
for $n<10$.
However, the mod $2$ cohomology of $B\spin(n)$ for $n\geq 10$ 
is no longer a polynomial algebra.
The mod $2$ cohomology of $B\spin_{10}(\mathbb{F}_q)$ is computed 
by Kleinerman in \cite{kleinerman-1982}.
Kuribayashi, Mimura and Nishimoto compute the mod $2$ cohomology of  
the free loop space of $B\spin(10)$ 
and they 
confirm a weak form of Tezuka's conjecture for $\ell=2$, $G=\spin(10)$
in \cite{kuribayashi-mimura-nishimoto-2006}. 
It is the first step towards the computation of the mod $2$ cohomology 
of the
free loop space of $BG$ with the mod $2$ cohomology not isomorphic to a 
polynomial algebra.
However, it seems to be difficult to compute the mod $2$ cohomology of
$\Loop B\spin(n)$  for $n > 10$. 
The proof of Theorem~\ref{theorem:main2}
 in this paper does not depend on the explicit computation of 
the mod $2$ cohomology of $\Loop B\spin(n)$ even in the case $n\leq 10$.


\section{Proof of Theorem~\ref{theorem:main2}}

First, we set up definitions and notations.
By abuse of notation, as for $BSO_n$ in \S1, we denote by 
\[
\pi_n:\Loop B\spin_n \to B\spin_n, \quad \pi_{n,q}:\Loop_{\phi^q}B\spin_n \to B\spin_n,
\]
the projections obtained from the evaluation at $0$, $\mathrm{Map}(I, B\spin_n)\to B\spin_n$, 
by restricting it to subspaces $\Loop B\spin_n$ and $\Loop_{\phi^q} B\spin_n$.
There exists a map
 \[
p_n:B\spin_n \to BSO_n
 \]
 induced by the obvious projection $\spin(n) \to SO(n)$ such that $\phi^q\circ p_n=p_n\circ \phi^q$
 and we have the following commutative diagrams:
 \[
 \begin{diagram}
 \node{\Loop B\spin_n} \arrow{s,l}{\pi_n} \arrow{e,t}{\Loop p_n} \node{\Loop BSO_n} \arrow{s,r}{\pi_n} 
 \\
 \node{B\spin_n} \arrow{e,t}{p_n} \node{BSO_n,} 
 \end{diagram}
 \quad
  \begin{diagram}
 \node{\Loop_{\phi^q} B\spin_{n,q}} \arrow{s,l}{\pi_{n,q}} 
 \arrow{e,t}{\Loop_{\phi^q} p_n} \node{\Loop_{\phi^q} BSO_n} \arrow{s,r}{\pi_{n,q}}
 \\
 \node{B\spin_n} \arrow{e,t}{p_n} \node{BSO_n.} 
 \end{diagram}
 \]
 Let us define spaces $F_n, F_{n, q}$ and maps $\xi_n$, $\tilde{p}_n$, $\xi_{n,q}$, $\tilde{p}_{n,q}$
 by the following pull-back diagrams:
 \[
 \begin{diagram}
 \node{F_{n}} \arrow{e,t}{\tilde{p}_n} \arrow{s,l}{\xi_{n}}  \node{\Loop BSO_n}\arrow{s,r}{\pi_n} \\
 \node{B\spin_n} \arrow{e,t}{p_n} \node{BSO_n,}
 \end{diagram}
 \begin{diagram}
 \node{F_{n,q}} \arrow{e,t}{\tilde{p}_{n,q}} \arrow{s,l}{\xi_{n,q}} \node{\Loop_{\phi^q} BSO_n}\arrow{s,r}{\pi_{n,q}} \\
 \node{B\spin_n} \arrow{e,t}{p_n} \node{BSO_n.}
 \end{diagram}
 \]
By definition, the projections 
\[
\pi_n:\Loop B\spin_n \to B\spin_n, \quad \pi_{n,q}:\Loop_{\phi^q} B\spin_n \to B\spin_n
\]
 factors through $F_n$, $F_{n,q}$. Let 
\[
\eta_n:\Loop B\spin_n \to F_n, \quad \eta_{n,q}:\Loop_{\phi^q} B\spin_n \to F_{n,q},
\]
be maps such that $\pi_n=\xi_n \circ \eta_n$, $\pi_{n,q}=\xi_{n,q}\circ \eta_{n,q}$,
respectively.

Next, we compute the mod $2$ cohomology of $F_n$, $F_{n,q}$ using the results on the mod $2$ cohomology of
$\Loop BSO_n$.


\begin{proposition} \label{proposition:free1}
There exists an isomorphism of algebras over the mod $2$ Steenrod algebra
$$\beta_n: H^{*}(B\spin_n) \otimes_{H^{*}(BSO_n)} H^{*}(\Loop BSO_n)\to H^{*}(F_{n}).$$
\end{proposition}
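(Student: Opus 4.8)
The plan is to compute the mod $2$ cohomology of $F_n$ via the Eilenberg–Moore spectral sequence associated with the pull-back square defining $F_n$, i.e. the square

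\[
\begin{diagram}
\node{F_{n}} \arrow{e,t}{\tilde{p}_n} \arrow{s,l}{\xi_{n}}  \node{\Loop BSO_n}\arrow{s,r}{\pi_n} \\
\node{B\spin_n} \arrow{e,t}{p_n} \node{BSO_n.}
\end{diagram}
\]

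The $E_2$-term of this spectral sequence is $\mathrm{Tor}_{H^{*}(BSO_n)}^{*,*}(H^{*}(B\spin_n), H^{*}(\Loop BSO_n))$. The key observation is that by Theorem~\ref{theorem:polynomial} applied to $X = BSO(n)$ (whose mod $2$ cohomology is polynomial on the Stiefel–Whitney classes $w_2,\dots,w_n$), we have $H^{*}(\Loop BSO_n) = H^{*}(BSO_n)\otimes \Delta(\sigma(w_2),\dots,\sigma(w_n))$, so $H^{*}(\Loop BSO_n)$ is a \emph{free} module over $H^{*}(BSO_n)$. Freeness makes all higher $\mathrm{Tor}$ vanish, leaving $E_2 = \mathrm{Tor}^0 = H^{*}(B\spin_n)\otimes_{H^{*}(BSO_n)} H^{*}(\Loop BSO_n)$ concentrated in a single column, so the spectral sequence collapses and there are no extension problems. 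This identifies $H^{*}(F_n)$ with the claimed tensor product as a graded $\mathbb{Z}/2$-module, and in fact as an algebra, the edge homomorphism $\beta_n$ being induced by $\xi_n^*$ and $\tilde{p}_n^*$.

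Concretely, the map $\beta_n$ should be defined by $\beta_n(a\otimes b) = \xi_n^{*}(a)\cdot \tilde{p}_n^{*}(b)$ for $a\in H^{*}(B\spin_n)$, $b\in H^{*}(\Loop BSO_n)$; one checks this is well-defined on the tensor product over $H^{*}(BSO_n)$ because the square commutes, so $\xi_n^{*}\circ p_n^{*} = \tilde{p}_n^{*}\circ \pi_n^{*}$. Since $\xi_n^{*}$ and $\tilde{p}_n^{*}$ are ring maps commuting with Steenrod operations, $\beta_n$ is a map of algebras over the Steenrod algebra; that it is an isomorphism is exactly the collapse statement above. Alternatively, and perhaps more transparently, one may argue directly: $F_n \to B\spin_n$ is the pull-back of the fibration $\pi_n:\Loop BSO_n \to BSO_n$ with fibre $\Omega BSO_n \simeq SO(n)$, so its Leray–Serre spectral sequence is the pull-back of that of $\pi_n$; since the latter collapses by Theorem~\ref{theorem:polynomial} and $H^{*}(\Loop BSO_n)$ is $H^{*}(BSO_n)$-free, the same simple system $\{\sigma(w_k)\}$ survives in the spectral sequence for $F_n$, giving $H^{*}(F_n) = H^{*}(B\spin_n)\otimes \Delta(\xi_n^{*}(\cdots),\dots)$, which is precisely the asserted tensor product.

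\textbf{The main obstacle} I anticipate is not the collapse itself but verifying the multiplicative and Steenrod-module structure carefully — in particular confirming that the relations $\sigma(w_k)^2 = \sigma(\mathrm{Sq}^{\,k-1}w_k)$ from Theorem~\ref{theorem:polynomial} pull back correctly and that no new multiplicative extensions appear in $H^{*}(F_n)$ beyond those already present in $H^{*}(\Loop BSO_n)$. Because $H^{*}(BSO_n)\to H^{*}(B\spin_n)$ is a map of polynomial-type algebras under which the simple-system generators remain a simple system after base change (the $\sigma(w_k)$ being exterior-like on squares), the base-changed algebra $H^{*}(B\spin_n)\otimes_{H^{*}(BSO_n)} H^{*}(\Loop BSO_n)$ inherits a well-defined algebra structure, and the collapse guarantees $\beta_n$ matches it; so the obstacle is really bookkeeping rather than a genuine difficulty. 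One should also note for later use that $\xi_n^{*}$ is surjective, which will be needed when comparing $F_n$ with $\Loop B\spin_n$ in the subsequent argument.
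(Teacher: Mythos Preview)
Your proof is correct and follows essentially the same route as the paper: collapse of the Eilenberg--Moore spectral sequence for the defining pull-back square, using that $H^{*}(\Loop BSO_n)$ is free over $H^{*}(BSO_n)$ by Theorem~\ref{theorem:polynomial}, so that the higher $\mathrm{Tor}$ groups vanish and $E_2=\mathrm{Tor}^{0}$ already gives the answer. One caution on your closing remark: $\xi_n^{*}:H^{*}(B\spin_n)\to H^{*}(F_n)$ is (split) injective rather than surjective, and the paper's later comparison of $F_n$ with $\Loop B\spin_n$ proceeds via the Gysin sequence for $\eta_n$, not via any surjectivity of $\xi_n^{*}$.
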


\begin{proof}
Consider the Eilenberg-Moore spectral sequence associated with the 
fibre square
\[
\begin{diagram}
\node{F_{n}} \arrow{s,l}{\xi_n} \arrow{e,t}{\tilde{p}_n} \node{\Loop BSO_n} \arrow{s,r}{\pi_n} \\
\node{B\spin_n} \arrow{e,t}{p_n} \node{BSO_n.}
\end{diagram}
\]
Since $BSO_n$ is simply-connected, the Eilenberg-Moore spectral sequence
\[
\mathrm{Tor}_{H^{*}(BSO_n)}^{*,*}(H^{*}(B\spin_n), H^{*}(\Loop BSO_n)) \Longrightarrow H^{*}(F_{n})
\]
converges.
By Theorem~\ref{theorem:polynomial}, 
$H^{*}(\Loop BSO_n)$ is a free $H^{*}(BSO_n)$-module.
Hence, 
$$\mathrm{Tor}^{i,*}_{H^{*}(BSO_n)}(H^{*}(B\spin_n), H^{*}(\Loop BSO_n))=0$$ for $i\not=0$.
Therefore, the Eilenberg-Moore spectral sequence collapses at the $E_2$-level and
we have no extension problem and so we obtain 
\begin{align*}
H^{*}(F_n)
&=\mathrm{Tor}^{0,*}_{H^{*}(BSO_n)} (H^{*}(B\spin_n), H^{*}(\Loop B\spin_n))
\\
&=H^{*}(B\spin_n) \otimes_{H^{*}(BSO_n)} H^{*}(\Loop BSO_n). \qedhere
\end{align*}
\end{proof}

Let $E_r^{*,*}(\Loop BSO_n)$,  $E_r^{*,*}(\tilde{B}A_{n-1})$, $E_r^{*,*}(\Loop_{\phi} BSO_n)$ be Leray-Serre spectral sequences converging to $H^{*}(\Loop BSO_n)$, $H^{*}(\tilde{B}A_{n-1})$, $H^{*}(\Loop_{\phi^q} BSO_n)$ associated with
vertical fibrations in the following diagram defined in \S1:
\[
\begin{diagram}
\node{\Loop BSO_n} \arrow{s,l}{\pi_n}  \node{\tilde{B}A_{n-1}} \arrow{s,r}{\pi_0} \arrow{e,t}{\tilde{\alpha}_{n, q}} \arrow{w,t}{\tilde{\alpha}_n} \node{\Loop_{\phi^q}BSO_n} \arrow{s,r}{\pi_{n,q}}
\\
\node{BSO_n} \node{BA_{n-1}} \arrow{e,t}{\alpha_n} \arrow{w,t}{\alpha_n} \node{BSO_n.}
\end{diagram}
\]
The fibre of vertical fibrations is the based loop space $\Omega BSO_n$.
We denote the inclusion of it into $\Loop BSO_n$, $\tilde{B}A_{n-1}$, $\Loop_{\phi^q} BSO_n$ by
\[
i_n:{\Omega BSO_n} \to \Loop BSO_n, \quad i_0:{\Omega BSO_n} \to \tilde{B}A_{n-1}, \quad i_{n,q}:
{\Omega BSO_n} \to \Loop_{\phi^q} BSO_n,
\]
 respectively.


\begin{proposition}
\label{proposition:mono}
Each of the Leray-Serre special sequences above collapses at the $E_2$-level and the induced homomorphisms
\[
E_r^{*,*}(\Loop BSO_n)\stackrel{\tilde{\alpha}_n^*}{\longrightarrow}
 E_r^{*,*}(\tilde{B}A_{n-1})
 \stackrel{\tilde{\alpha}_{n,q}^*}{\longleftarrow}
E_r^{*,*}(\Loop_{\phi^q}  BSO_n)
\]
are monomorphisms for all $r\geq 2$.
\end{proposition}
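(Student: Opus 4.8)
The plan is to deduce everything by comparing the outer two spectral sequences with the middle one, $E_r^{*,*}(\tilde{B}A_{n-1})$, using only two non-formal inputs. The first is that $E_r^{*,*}(\Loop BSO_n)$ collapses at $E_2$, which is Theorem~\ref{theorem:polynomial} applied to the polynomial algebra $H^{*}(BSO_n)=\mathbb{Z}/2[w_2,\dots,w_n]$. The second is that $\alpha_n^{*}\colon H^{*}(BSO_n)\to H^{*}(BA_{n-1})$ is injective; I would check this directly, observing that the defining $n$-dimensional representation of $A_{n-1}\subset SO(n)$ splits as a sum of linear characters whose first Stiefel--Whitney classes $x_1,\dots,x_n$ generate $H^{*}(BA_{n-1})$ and satisfy $x_1+\dots+x_n=0$, so that $\prod_{i=1}^{n}(T+x_i)=T^{n}+\sum_{k=2}^{n}\alpha_n^{*}(w_k)\,T^{n-k}$ over $\mathbb{Z}/2$; hence each $x_i$ is integral over the image of $\alpha_n^{*}$, $H^{*}(BA_{n-1})$ is finite over that image, and since both rings are polynomial domains on $n-1$ generators the map $\alpha_n^{*}$ has trivial kernel.

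Next I would identify the three $E_2$-pages. Since $BSO_n$ is simply connected, each of the three vertical fibrations --- all with fibre $\Omega BSO_n$ --- has trivial local coefficients (for the middle one because its fibration is the pull-back of $\pi_n$ along $\alpha_n$), so the $E_2$-page of each outer spectral sequence is $H^{*}(BSO_n)\otimes H^{*}(\Omega BSO_n)$ and that of the middle one is $H^{*}(BA_{n-1})\otimes H^{*}(\Omega BSO_n)$. The maps $\tilde{\alpha}_n$ and $\tilde{\alpha}_{n,q}$ are maps of fibrations lying over $\alpha_n$ that restrict to the identity on the common fibre $\Omega BSO_n$ --- the fibre of a pulled-back fibration being carried identically --- so on $E_2$ both induced homomorphisms of spectral sequences are equal to $\alpha_n^{*}\otimes\mathrm{id}_{H^{*}(\Omega BSO_n)}$. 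As $\alpha_n^{*}$ is injective and $H^{*}(\Omega BSO_n)$ is $\mathbb{Z}/2$-free, these $E_2$-level maps are monomorphisms.

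Then I would prove that the middle spectral sequence $E_r^{*,*}(\tilde{B}A_{n-1})$ collapses at $E_2$. By Theorem~\ref{theorem:polynomial} the restriction $i_n^{*}\colon H^{*}(\Loop BSO_n)\to H^{*}(\Omega BSO_n)$ is surjective, since it carries each $\sigma(w_k)$ onto $i_n^{*}\sigma(w_k)$ and these elements generate $H^{*}(\Omega BSO_n)=\Delta(i_n^{*}\sigma(w_2),\dots,i_n^{*}\sigma(w_n))$ as a $\mathbb{Z}/2$-algebra. From the (strict) identity $\tilde{\alpha}_n\circ i_0=i_n$ one gets $i_0^{*}\circ\tilde{\alpha}_n^{*}=i_n^{*}$, so $i_0^{*}\colon H^{*}(\tilde{B}A_{n-1})\to H^{*}(\Omega BSO_n)$ is surjective too; since $H^{*}(\Omega BSO_n)$ has finite type, the Leray--Hirsch theorem applies and $E_r^{*,*}(\tilde{B}A_{n-1})$ collapses at $E_2$.

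Finally I would run the comparison. Because $E_r^{*,*}(\Loop BSO_n)=E_2^{*,*}(\Loop BSO_n)$ and $E_r^{*,*}(\tilde{B}A_{n-1})=E_2^{*,*}(\tilde{B}A_{n-1})$ for all $r\ge2$, the map $\tilde{\alpha}_n^{*}$ is a monomorphism on every page. For $\tilde{\alpha}_{n,q}^{*}$ I would induct on $r$: it is a map of spectral sequences into the collapsed one, so $\tilde{\alpha}_{n,q}^{*}\circ d_r=d_r\circ\tilde{\alpha}_{n,q}^{*}=0$; assuming $\tilde{\alpha}_{n,q}^{*}$ is injective on $E_r^{*,*}(\Loop_{\phi^q}BSO_n)$ --- which holds for $r=2$ --- this forces $d_r=0$ there, whence $E_{r+1}^{*,*}(\Loop_{\phi^q}BSO_n)=E_r^{*,*}(\Loop_{\phi^q}BSO_n)$ and $\tilde{\alpha}_{n,q}^{*}$ is again injective on $E_{r+1}$. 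This proves simultaneously that $E_r^{*,*}(\Loop_{\phi^q}BSO_n)$ collapses at $E_2$ and that $\tilde{\alpha}_{n,q}^{*}$ is a monomorphism for all $r\ge2$. I expect the only genuinely non-formal step to be the collapse of $E_r^{*,*}(\tilde{B}A_{n-1})$ via Leray--Hirsch --- everything after it is bookkeeping --- and the point that needs real care is that $i_0$, $i_n$, $\tilde{\alpha}_n$ and $\tilde{\alpha}_{n,q}$ satisfy the identities above on the nose, which is exactly what the strict commutativity arranged in Proposition~\ref{proposition:ba} provides.
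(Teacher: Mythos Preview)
Your proof is correct and follows essentially the same approach as the paper's: collapse of $E_r^{*,*}(\Loop BSO_n)$ via Theorem~\ref{theorem:polynomial}, collapse of $E_r^{*,*}(\tilde{B}A_{n-1})$ via Leray--Hirsch using the factorization $i_n^{*}=i_0^{*}\circ\tilde{\alpha}_n^{*}$, identification of the $E_2$-maps with $\alpha_n^{*}\otimes 1$, and then the same induction on $r$ for $\Loop_{\phi^q}BSO_n$. You are simply more explicit than the paper in two places---you spell out why $\alpha_n^{*}$ is injective (the paper treats this as known) and why the local coefficients are trivial---but the logical skeleton is identical.
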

\begin{proof}
The Leray-Serre spectral sequence $E_r^{*,*}(\Loop BSO_n)$ collapses at the $E_2$-level
by Theorem~\ref{theorem:polynomial}.

Next, we show that the Leray-Serre spectral sequence 
$E_r^{*,*}(\tilde{B}A_{n-1})$ collapses
at the $E_2$-level.
Since the induced homomorphism 
\[
i_n^*=(\tilde{\alpha}_{n} \circ i_0)^*
:
H^{*}(\Loop BSO_n) \to H^{*}({\Omega BSO_n})
\]
 is an epimorphism, $i_0^*$ is also an epimorphism. Hence, 
 by the Leray-Hirsh theorem, the Leray-Serre spectral sequence 
 $E_r^{*,*}(\tilde{B}A_{n-1})$ collapses at the $E_2$-level. 
 Since the induced homomorphism
 \[
 \tilde{\alpha}_n^*:E_2^{*,*}(\Loop BSO_n) \to E_2^{*,*}(\tilde{B}A_{n-1})
 \]
 is equivalent to 
 \[
 \alpha_n^* \otimes 1 : H^{*}(BSO_n) \otimes H^{*}({\Omega BSO_n}) 
 \to 
 H^{*}(BA_{n-1})\otimes H^{*}({\Omega BSO_n}),
 \]
 it  is a monomorphism.
Since both spectral sequence collapse at the $E_2$-level, 
 the induced homomorphism
  \[
 \tilde{\alpha}_n^*:E_r^{*,*}(\Loop BSO_n) \to E_r^{*,*}(\tilde{B}A_{n-1})
 \]
 is also a monomorphism for all $r\geq 2$.

 Finally, we deal with the Leray-Serre spectral sequence 
 $E_r^{*,*}(\Loop_{\phi^q}  BSO_n)$.
The induced homomorphism 
$\tilde{\alpha}_{n,q}^*:E_2^{*,*}(\Loop_{\phi^q} BSO_n) \to 
E_2^{*,*}(\tilde{B}A_{n-1})$ is 
 a monomorphism. 
 By induction on $r$, we show that  $E_r^{*,*}(\Loop_{\phi^q} BSO_n)=E_2^{*,*}(\Loop_{\phi^q} BSO_n)$
 for all $r\geq 2$.
Suppose that we have $E_r^{*,*}(\Loop_{\phi^q} BSO_n)=E_2^{*,*}(\Loop_{\phi^q} BSO_n)$
for some $r$.
 Then, it is clear that the induced homomorphism 
 $\tilde{\alpha}_{n,q}^*:E_r^{*,*}(\Loop_{\phi^q} BSO_n) \to E_r^{*,*}(\tilde{B}A_{n-1})$
 is a monomorphism.
 Since $d_r:E_r^{*,*}(\tilde{B}A_{n-1})\to E_r^{*+r,*-r+1}(\tilde{B}A_{n-1})$ is zero, 
 $\tilde{\alpha}_{n,q}^* \circ d_r= d_r\circ \tilde{\alpha}_{n,q}^*=0$. Hence, we have 
 $d_r=0$ for $E_r^{*,*}(\Loop_{\phi^q} BSO_n)$.
 So, we have 
 \[
 E_{r+1}^{*,*}(\Loop_{\phi^q} BSO_n)=E_r^{*,*}(\Loop_{\phi^q} BSO_n)=E_2^{*,*}(\Loop_{\phi^q} BSO_n).
 \]
Therefore, the Leray-Serre spectral sequence 
 $E_r^{*,*}(\Loop_{\phi^q} BSO_n)$ collapses at the $E_2$-level
 and the induced homomorphism 
 $\tilde{\alpha}_{n,q}^*:E_r^{*,*}(\Loop_{\phi^q} BSO_n) \to E_r^{*,*}(\tilde{B}A_{n-1})$
 is a monomorphism for all $r\geq 2$.
 \end{proof}


To finish the computation of the mod $2$ cohomology of $F_n, F_{n,q}$,
we prove the counter part of Proposition~\ref{proposition:free1}.

\begin{proposition}\label{proposition:free2}
There exists an isomorphism of algebras over the mod $2$ Steenrod algebra
\[
\beta_{n,q}: 
H^{*}(B\spin_n) \otimes_{H^{*}(BSO_n)} H^{*}(\Loop_{\phi^q} BSO_n)\to H^{*}(F_{n,q}).
\]
\end{proposition}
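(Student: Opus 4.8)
The plan is to run the Eilenberg--Moore spectral sequence exactly as in the proof of Proposition~\ref{proposition:free1}, the only new ingredient being a substitute for the freeness input that there came from Theorem~\ref{theorem:polynomial}. Associated with the pull-back square defining $F_{n,q}$ there is an Eilenberg--Moore spectral sequence
\[
\mathrm{Tor}_{H^{*}(BSO_n)}^{*,*}(H^{*}(B\spin_n),\, H^{*}(\Loop_{\phi^q} BSO_n)) \Longrightarrow H^{*}(F_{n,q}),
\]
which converges because $BSO_n$ is simply connected and which is a spectral sequence of algebras over the mod $2$ Steenrod algebra. Once I know this spectral sequence collapses onto the $E_2$-line $\mathrm{Tor}^{0,*}$ with no extension problem, the resulting edge isomorphism will be the desired $\beta_{n,q}$, automatically compatible with products and with Steenrod squares.

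So the real task is to show that $H^{*}(\Loop_{\phi^q} BSO_n)$ is a free $H^{*}(BSO_n)$-module via $\pi_{n,q}^{*}$, which forces $\mathrm{Tor}^{i,*}_{H^{*}(BSO_n)}(H^{*}(B\spin_n), H^{*}(\Loop_{\phi^q} BSO_n)) = 0$ for $i \neq 0$. For this I would first invoke Proposition~\ref{proposition:mono}: the Leray--Serre spectral sequence of the fibration $\Omega BSO_n \to \Loop_{\phi^q} BSO_n \to BSO_n$ collapses at the $E_2$-level. Consequently $E_\infty^{0,*} = E_2^{0,*} = H^{*}(\Omega BSO_n)$, so the edge homomorphism, which is exactly $i_{n,q}^{*}\colon H^{*}(\Loop_{\phi^q} BSO_n) \to H^{*}(\Omega BSO_n)$, is surjective. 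Since $H^{*}(\Omega BSO_n)$ is of finite type, I may lift an additive basis of it to classes in $H^{*}(\Loop_{\phi^q} BSO_n)$, and the Leray--Hirsch theorem then identifies $H^{*}(\Loop_{\phi^q} BSO_n)$ with the free $H^{*}(BSO_n)$-module on that basis.

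With the freeness in hand the Eilenberg--Moore spectral sequence collapses at $E_2$, there is no extension problem, and
\[
H^{*}(F_{n,q}) = \mathrm{Tor}^{0,*}_{H^{*}(BSO_n)}(H^{*}(B\spin_n),\, H^{*}(\Loop_{\phi^q} BSO_n)) = H^{*}(B\spin_n) \otimes_{H^{*}(BSO_n)} H^{*}(\Loop_{\phi^q} BSO_n),
\]
which is $\beta_{n,q}$, an isomorphism of algebras over the mod $2$ Steenrod algebra. I expect the only point requiring genuine care to be the passage from collapse of the Leray--Serre spectral sequence to freeness of $H^{*}(\Loop_{\phi^q} BSO_n)$ over $H^{*}(BSO_n)$: one must check that surjectivity of $i_{n,q}^{*}$ really does follow from the collapse through the edge homomorphism (rather than being an additional hypothesis), and that the Leray--Hirsch hypotheses — path-connected base, finite-type fibre — are in force here; everything after that is a transcription of the proof of Proposition~\ref{proposition:free1}.
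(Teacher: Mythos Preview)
Your proposal is correct and follows essentially the same route as the paper: invoke Proposition~\ref{proposition:mono} to get collapse of the Leray--Serre spectral sequence for $\Loop_{\phi^q} BSO_n$, deduce that $H^{*}(\Loop_{\phi^q} BSO_n)$ is free over $H^{*}(BSO_n)$, and then run the Eilenberg--Moore spectral sequence exactly as in Proposition~\ref{proposition:free1}. The paper's proof is terser, passing directly from collapse to freeness without spelling out the edge-homomorphism/Leray--Hirsch step you describe, but the argument is the same.
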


\begin{proof}
By Proposition~\ref{proposition:mono}, 
the Leray-Serre spectral sequence $E_r^{*,*}(\Loop_{\phi^q} BSO_n)$ collapses at the $E_2$-level.
So, $H^{*}(\Loop_{\phi^q} BSO_n)$ is a free $H^{*}(BSO_n)$-module. Therefore, 
as 
in Proposition~\ref{proposition:free1},
the 
Eilenberg-Moore spectral sequence associated with 
the fibre square 
\[
\begin{diagram}
\node{F_{n,q}} \arrow{s,l}{\xi_{n,q}} \arrow{e,t}{\tilde{p}_{n,q}} \node{\Loop_{\phi^q} BSO_n} \arrow{s,r}{\pi_{n,q}} \\
\node{B\spin_n} \arrow{e,t}{p_n} \node{BSO_n}
\end{diagram}
\]
gives us the desired isomorphism.
\end{proof}

With Proposition~\ref{proposition:mono}, we have 
$\tilde{\alpha}_n^*(H^i(\Loop BSO_n))$ and 
$\tilde{\alpha}_{n,q}^*(H^i(\Loop_{\phi^q} BSO_n))$ 
have the same dimension as  vector spaces.
But we do not know whether these  are the same subspace yet.
Next, we show that 
$\tilde{\alpha}_n^*(H^*(\Loop BSO_n))
=\tilde{\alpha}_{n,q}^*(H^*(\Loop_{\phi^q} BSO_n))$ and  prove Theorem~\ref{theorem:SO}.
The key of our proof of Theorem~\ref{theorem:SO} is the following proposition.



\begin{proposition}\label{proposition:invariant}
The induced homomorphism 
\[
\alpha_n^*: H^{*}(BSO_n) \to H^{*}(BA_{n-1})^{S_n}
\]
is an isomorphism
for $n\geq 3$.
\end{proposition}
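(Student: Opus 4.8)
The plan is to make the three objects in the statement completely explicit and then reduce the proposition to a short piece of commutative algebra whose crux is a degree count for field extensions. Recall that $BSO_n$ has the mod $2$ cohomology of the classical space $BSO(n)$, so $H^{*}(BSO_n)=\mathbb{Z}/2[w_2,\dots,w_n]$ on the Stiefel--Whitney classes. The group $A_{n-1}\cong(\mathbb{Z}/2)^{n-1}$ is the kernel of the product homomorphism $(\mathbb{Z}/2)^n\to\mathbb{Z}/2$, so, writing $e_1,\dots,e_n$ for the elementary symmetric polynomials in the degree-one generators $t_1,\dots,t_n$ of $H^{*}(B(\mathbb{Z}/2)^n)$, one has
\[
H^{*}(BA_{n-1})=R:=\mathbb{Z}/2[t_1,\dots,t_n]/(e_1),
\]
which is a polynomial ring in $n-1$ variables, in particular a domain, and the Weyl group $S_n$ acts on it by permuting the $t_i$. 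The map $\alpha_n$ is induced by the inclusion $A_{n-1}\subset SO(n)$, so $\alpha_n^{*}(w_i)=\bar e_i$, the image of $e_i$ in $R$. Since the $e_i$ are symmetric, $\alpha_n^{*}$ lands in $R^{S_n}$; this inclusion is formal, and the whole content of the proposition is that it is onto and injective.

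I would first dispatch injectivity together with a finiteness statement. The polynomial ring $\mathbb{Z}/2[t_1,\dots,t_n]$ is free of rank $n!$ over its subring of symmetric polynomials $\mathbb{Z}/2[e_1,\dots,e_n]$; reducing a basis of this free module modulo $e_1$ shows immediately that $S:=\mathbb{Z}/2[e_1,\dots,e_n]/(e_1)=\mathbb{Z}/2[\bar e_2,\dots,\bar e_n]$ is again a polynomial algebra and that $R$ is free of rank $n!$ over $S$. In particular $\bar e_2,\dots,\bar e_n$ are algebraically independent, so $\alpha_n^{*}\colon\mathbb{Z}/2[w_2,\dots,w_n]\to R$ is injective with image exactly $S$, and $R$ is a finite, hence integral, ring extension of $S$.

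The substantive step is the reverse inclusion $R^{S_n}\subseteq S$, and I would argue at the level of fields of fractions. For $n\geq 3$ the $S_n$-action on the degree-one part of $R$ is the standard representation of $S_n$ over $\mathbb{Z}/2$, which is faithful, so $S_n$ acts faithfully on the domain $R$ and hence on $\mathrm{Frac}(R)$; by Artin's theorem $\mathrm{Frac}(R)/\mathrm{Frac}(R)^{S_n}$ is Galois of degree $n!$, and $\mathrm{Frac}(R)^{S_n}=\mathrm{Frac}(R^{S_n})$. On the other hand $\mathrm{Frac}(R)$ is obtained from $R$ by inverting the nonzero elements of $S$, so, $R$ being $S$-free of rank $n!$, it is a $\mathrm{Frac}(S)$-vector space of dimension $n!$; thus $[\mathrm{Frac}(R):\mathrm{Frac}(S)]=n!$. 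Since $S\subseteq R^{S_n}$ gives $\mathrm{Frac}(S)\subseteq\mathrm{Frac}(R^{S_n})\subseteq\mathrm{Frac}(R)$ with the two outer extensions of equal degree, we conclude $\mathrm{Frac}(R^{S_n})=\mathrm{Frac}(S)$. Finally $R^{S_n}$ is integral over $S$, and $S$, being a polynomial ring, is integrally closed in $\mathrm{Frac}(S)=\mathrm{Frac}(R^{S_n})$; hence $R^{S_n}=S$, and together with the previous paragraph $\alpha_n^{*}$ is the asserted isomorphism onto $H^{*}(BA_{n-1})^{S_n}$.

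I expect the main obstacle to be precisely this last inclusion: taking $S_n$-invariants is not exact in characteristic $2$, so the freeness of $R$ over $S$ does not by itself exhibit $S$ as the full ring of invariants, and one genuinely needs the passage to fraction fields---where Artin's theorem forces the degree to be exactly $n!$---to rule out extra invariants. A secondary technical point to be handled with care is the identification $\alpha_n^{*}(w_i)=\bar e_i$ in the $\mathbb{Z}/2$-completed \'etale setting; this follows from the agreement of the mod $2$ cohomology of the homotopy types in play with that of their classical counterparts, together with the construction of $\alpha_n$ out of the inclusion $A_{n-1}\subset SO(n)$.
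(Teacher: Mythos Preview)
Your argument is correct and rests on the same three pillars as the paper's proof: the identification $H^{*}(BA_{n-1})=\mathbb{Z}/2[t_1,\dots,t_n]/(e_1)$ with $\alpha_n^{*}(w_i)=\bar e_i$, the finiteness (indeed freeness of rank $n!$) of $R$ over $S=\mathbb{Z}/2[\bar e_2,\dots,\bar e_n]$, and the faithfulness of the $S_n$-action on the degree-one part for $n\geq 3$. The difference is only in packaging. The paper quotes a black-box criterion from invariant theory (Smith, \emph{Polynomial invariants of finite groups}, Proposition~5.5.5): if a subalgebra $R\subset K[V]^{W}$ is generated by $\dim V$ homogeneous elements whose degree product equals $|W|$ and $R\hookrightarrow K[V]$ is finite, then $R=K[V]^{W}$. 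It then verifies these hypotheses, including a separate short proof (the paper's Proposition~5.2) that $S_n$ acts faithfully on $H^{1}(BA_{n-1})$ for $n\geq 3$, a fact you take as known. You instead reprove Smith's criterion inline via fraction fields: Artin's theorem gives $[\mathrm{Frac}(R):\mathrm{Frac}(R^{S_n})]=n!$, freeness gives $[\mathrm{Frac}(R):\mathrm{Frac}(S)]=n!$, and integral closedness of the polynomial ring $S$ forces $R^{S_n}=S$. Your route is self-contained and makes transparent exactly where the degree count and faithfulness enter; the paper's route is shorter by outsourcing that step to the literature.
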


The induced homomorphism
 $\alpha_n^*$ is a monomorphism
 and
 the image of $\alpha_n^*$ is generated by the Stiefel-Whitney classes 
$w_2, \dots, w_n$.
So, Proposition~\ref{proposition:invariant} is equivalent to say that 
$H^{*}(BA_{n-1})^{S_n}=\mathbb{Z}/2[w_2, \dots, w_n]$ for $n\geq 3$.
It is not difficult to prove Proposition~\ref{proposition:invariant} and 
it might be a folklore but 
the author could not find a reference for Proposition~\ref{proposition:invariant}.
So, for completeness, we give a proof of  this proposition in the next section.
Note that, however, 
\[
H^{*}(BSO_2)=\mathbb{Z}/2[w_2]  \not \simeq H^{*}(BA_1)^{S_2}=H^{*}(BA_1), 
\]
hence we can not prove Theorem~\ref{theorem:SO} for the case $n=2$.


\begin{proof}[Proof of Theorem~\ref{theorem:SO} 
modulo Proposition~\ref{proposition:invariant}]
By Proposition~\ref{proposition:mono}, the induced homomorphisms in Theorem~\ref{theorem:SO} are 
monomorphisms.
Since $\dim H^i(\Loop BSO_n)=
\dim H^i(\Loop_{\phi^q} BSO_n)$ for all $i$, to complete the proof of Theorem~\ref{theorem:SO}, 
it suffices to show that $\dim H^i(\tilde{B}A_{n-1})^{S_n} \leq \dim H^i(\Loop BSO_n)$ for all $i$.
As we stated in \S1, before the statement of Theorem~\ref{theorem:SO}, 
the Weyl group action of $S_n$ on $A_{n-1}$ induces its action on 
the mod $2$ cohomology $H^{*}(\tilde{B}A_{n-1})$.
It also induces its action on the Leray-Serre spectral sequence 
$E_r^{*,*}(\tilde{B}A_{n-1})$.
Since  
the image of $\tilde{\alpha}_{n}^*$ is invariant under the action of the Weyl group $S_n$, 
it acts on $E_r^{0,*}(\tilde{B}A_{n-1})=H^{*}(\Omega BSO_n)$ trivially.
Hence, we have  
\begin{align*}
E_\infty^{*,*}(\tilde{B}A_{n-1})^{S_n}
&=(H^{*}({B}A_{n-1} )
 \otimes H^{*}({\Omega BSO_n}))^{S_n}
  \\
 &
 =H^{*}({B}A_{n-1} )^{S_n} \otimes H^{*}({\Omega BSO_n}).
 \end{align*}
By Proposition~\ref{proposition:invariant}, the above homomorphism $\tilde{\alpha}_{n}^*$  induces the isomorphism
\[
E_\infty^{*,*}(\Loop BSO_n ) \to E_\infty^{*,*}(\tilde{B}A_{n-1})^{S_n}.
\]
In general, for a finite group $W$ and for a finite dimensional filtered $\mathbb{Z}/2[W]$-module $M$,
we have an inequality $\dim M^W \leq \dim ( \mathrm{gr}(M) )^W$, where 
$\mathrm{gr}(M)$ is the associated graded module.
Therefore, we obtain 
\[
\dim H^i(\tilde{B}A_{n-1})^{S_n} \leq 
\dim \mathrm{gr}\, H^i(\tilde{B}A_{n-1})^{S_n}. \]
Since 
\[
E_\infty^{*,*}(\tilde{B}A_{n-1})^{S_n}=E_\infty^{*,*}(\Loop BSO_n)=H^{*}(\Loop BSO_n)
\]
as graded $\mathbb{Z}/2$-modules, 
we have $\dim \mathrm{gr}(H^i(\tilde{B}A_{n-1}))^{S_n} = \dim H^{i}(\Loop BSO_n)$.
Therefore, we have 
\[
\dim H^i(\tilde{B}A_{n-1})^{S_n} \leq 
\dim H^{i}(\Loop BSO_n). 
\]
This completes the proof of Theorem~\ref{theorem:SO} assuming Proposition~\ref{proposition:invariant}.
\end{proof}

Finally, we consider the Gysin sequences associated with $\eta_{n,q}$, $\eta_n$ to
prove Theorem~\ref{theorem:main2}.
The following proposition is immediate from the fact that $BSO_n$ and $B\spin_n$ are
simply-connected and 
that $H^1({\Omega BSO_n})=\mathbb{Z}/2$. But it plays an important role in our proof of Theorem~\ref{theorem:main2}, 
so we emphasize it here.


\begin{proposition}\label{proposition:one}
As vector spaces, 
\[
H^1(F_{n,q})=H^{1}(\Loop_{\phi^q} BSO_n)=H^1(\tilde{B}A_{n-1})^{S_n}= H^1(\Loop BSO_n)=H^1(F_n)= \mathbb{Z}/2
\]
and the following is a chain of isomorphisms\,{\rm :}
\[
H^1(F_{n,q}) \stackrel{\tilde{p}_{n}}{\longleftarrow} 
H^1(\Loop_{\phi^q} BSO_n) \stackrel{\tilde{\alpha}_{n,q}}{\longrightarrow} 
H^1(\tilde{B}A_{n-1})^{S_n} \stackrel{\tilde{\alpha}_{n}}{\longleftarrow}
H^1(\Loop BSO_n) \stackrel{\tilde{p}_{n}}{\longrightarrow}
H^1(F_{n}).
\]
\end{proposition}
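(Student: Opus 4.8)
The plan is to deduce everything from three facts already available: that $BSO_n$ and $B\spin_n$ are simply-connected (so the fibrations above are orientable and the base contributes nothing in degree $1$ beyond its own $H^1$, which vanishes), that $H^1(\Omega BSO_n)=\mathbb{Z}/2$, and that the three Leray--Serre spectral sequences $E_r^{*,*}(\Loop BSO_n)$, $E_r^{*,*}(\tilde B A_{n-1})$, $E_r^{*,*}(\Loop_{\phi^q}BSO_n)$ collapse at $E_2$ by Proposition~\ref{proposition:mono}. First I would compute each $H^1$. For the fibration $\Omega BSO_n\to \Loop BSO_n\to BSO_n$, collapse at $E_2$ gives $H^1(\Loop BSO_n)=E_\infty^{1,0}\oplus E_\infty^{0,1}=H^1(BSO_n)\oplus H^1(\Omega BSO_n)=0\oplus \mathbb{Z}/2=\mathbb{Z}/2$, and the edge homomorphism $i_n^*:H^1(\Loop BSO_n)\to H^1(\Omega BSO_n)$ is an isomorphism. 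The identical argument with the same fibre applies to $\tilde B A_{n-1}\to BA_{n-1}$ (using $H^1(BA_{n-1})$ does not enter because we already know by Proposition~\ref{proposition:mono} that $\tilde\alpha_n^*$ is injective in each bidegree, but more simply: collapse plus $H^1(BA_{n-1})$ lands in the image of $\tilde\alpha_n^*\circ(\text{degree-}1\text{ classes of }\Loop BSO_n)$ — actually the cleanest route is to observe that since $H^*(\Omega BSO_n)$ carries the trivial $S_n$-action, as shown in the proof of Theorem~\ref{theorem:SO}, the $S_n$-invariant part of $E_\infty^{0,1}$ is all of $\mathbb{Z}/2$, while $E_\infty^{1,0}=H^1(BA_{n-1})^{S_n}$, and the invariant ring computation is not needed in degree $1$ because the one-dimensional statement is about the fibre class) — giving $H^1(\tilde B A_{n-1})^{S_n}=\mathbb{Z}/2$, detected by restriction to the fibre. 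Likewise $H^1(\Loop_{\phi^q}BSO_n)=\mathbb{Z}/2$ via $i_{n,q}^*$, and for $F_n$, $F_{n,q}$ one uses Propositions~\ref{proposition:free1} and~\ref{proposition:free2}: in degree $1$ the tensor product $H^*(B\spin_n)\otimes_{H^*(BSO_n)}H^*(\Loop BSO_n)$ contributes only $H^1(\Loop BSO_n)=\mathbb{Z}/2$ since $H^1(B\spin_n)=0$ and $H^1(BSO_n)=0$, so $H^1(F_n)=\mathbb{Z}/2$ and similarly $H^1(F_{n,q})=\mathbb{Z}/2$.

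Next I would verify that each map in the displayed chain is an isomorphism, which now amounts to checking each is nonzero between two copies of $\mathbb{Z}/2$. For $\tilde p_n:H^1(\Loop BSO_n)\to H^1(F_n)$: under the identification of Proposition~\ref{proposition:free1}, $\tilde p_n^*$ is exactly the inclusion of the second tensor factor, hence an isomorphism in degree $1$; the same for $\tilde p_{n,q}$. For $\tilde\alpha_n^*:H^1(\Loop BSO_n)\to H^1(\tilde B A_{n-1})^{S_n}$: it is a monomorphism by Proposition~\ref{proposition:mono} between groups of equal dimension $1$, hence an isomorphism; identically for $\tilde\alpha_{n,q}^*$. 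This closes the chain.

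The only point requiring a little care — and the step I expect to be the mild obstacle — is pinning down $H^1(\tilde B A_{n-1})^{S_n}=\mathbb{Z}/2$ rather than merely $\geq 1$, i.e. ruling out an extra invariant class coming from $E_\infty^{1,0}=H^1(BA_{n-1})^{S_n}$. Here $H^1(BA_{n-1})=(\mathbb{Z}/2)^n$ is the permutation module on the coordinate classes $t_1,\dots,t_n$, and its $S_n$-invariants are one-dimensional, spanned by $t_1+\dots+t_n=w_1(\text{determinant line})$, which however pulls back from $BSO_n$ and is therefore $0$ in $H^1(\tilde B A_{n-1})$ — or one simply invokes the already-proven collapse together with the fact, established inside the proof of Theorem~\ref{theorem:SO}, that $\tilde\alpha_n^*$ is an isomorphism onto $E_\infty^{*,*}(\tilde B A_{n-1})^{S_n}$, which forces $\dim H^1(\tilde B A_{n-1})^{S_n}=\dim H^1(\Loop BSO_n)=1$. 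Either way the bound is immediate and the proposition follows.
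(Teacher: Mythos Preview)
Your proposal is correct and follows the same route the paper has in mind: the paper declares the proposition ``immediate from the fact that $BSO_n$ and $B\spin_n$ are simply-connected and that $H^1(\Omega BSO_n)=\mathbb{Z}/2$'' and gives no further argument, so your expansion via the collapsed Leray--Serre spectral sequences (Proposition~\ref{proposition:mono}) and the tensor-product descriptions of $H^*(F_n)$, $H^*(F_{n,q})$ (Propositions~\ref{proposition:free1}, \ref{proposition:free2}) is exactly what is intended. Since Theorem~\ref{theorem:SO} has already been established by this point in the paper, your second, cleaner route to $\dim H^1(\tilde B A_{n-1})^{S_n}=1$ is entirely legitimate.

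One slip to fix in your first route: $A_{n-1}\subset SO(n)$ has rank $n-1$, so $H^1(BA_{n-1})$ is not the permutation module $(\mathbb{Z}/2)^n$ but its quotient by $\langle t_1+\cdots+t_n\rangle$; in particular the class $t_1+\cdots+t_n$ is already zero in $H^1(BA_{n-1})$, not merely zero after pulling back somewhere. The correct statement is that $H^1(BA_{n-1})^{S_n}=0$ for $n\ge 3$ (a direct check with transpositions, or a consequence of Proposition~\ref{proposition:invariant} in degree~$1$), which then gives the desired upper bound. Your alternative appeal to Theorem~\ref{theorem:SO} sidesteps this entirely and is the cleanest way to close the argument.
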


Let us consider the commutative diagram below:
\[
\begin{diagram}
\node{\Omega B\spin_n} \arrow{e} \arrow{s,l}{\Omega p_n} 
\node{\Loop_{\phi^q} B\spin_n} \arrow{s,r}{\eta_{n,q}} \arrow{e,t}{\pi_{n,q}}
\node{B\spin_n} \arrow{s,r}{=} 
\\
\node{{\Omega BSO_n}} \arrow{e,t}{j_{n,q}} \node{F_{n,q}}
\arrow{e,t}{\xi_{n,q}} \node{B\spin_n,}
\end{diagram}
\]
where horizontal sequences are fibre sequences and $j_{n,q}$ is the inclusion of the fibre ${\Omega BSO_n}$ into $F_{n,q}$, so that $i_{n,q}=\tilde{p}_{n,q}\circ j_{n,q}$.
The homotopy fibre of $\eta_{n,q}$  and $\Omega p_n:\Omega B\spin_n \to {\Omega BSO_n}$ is homotopy equivalent to $S^0$.
So, we have the Gysin sequence 
\[
\cdots \to H^{*}(\Loop_{\phi^q} B\spin_n) \stackrel{\eta_{n,q}^*}{\longrightarrow}  H^{*}(F_{n,q}) \stackrel{\gamma_{n,q}}{\longrightarrow} H^{*+1}(F_{n,q}) \to \cdots
\]
The homomorphism $\gamma_{n,q}$ is the multiplication by the Euler class $e_{n,q} \in H^{1}(F_{n,q})$.
Comparing the Gysin sequence associated with $\Omega p_n$, 
we have $j_{n,q}^*(e_{n,q})\not=0$.
Since $H^1(F_{n,q})=\mathbb{Z}/2$, $e_{n,q}$ is the  generator of $H^1(F_{n,q})$.
Similarly, 
considering the Gysin sequence associated with $\eta_n:\Loop B\spin_n \to F_{n}$, say
\[
\cdots \to H^{*}(\Loop B\spin_n) \stackrel{\eta_{n}^*}{\longrightarrow}  H^{*}(F_{n}) \stackrel{\gamma_{n}}{\longrightarrow} H^{*+1}(F_{n}) \to \cdots,
\]
the homomorphism $\gamma_n$ is also the multiplication by the Euler class $e_{n} \in H^1(F_n)$ and
the Euler class $e_n \in H^{1}(F_n)$ is also the  generator of $H^1(F_n)=\mathbb{Z}/2$.


\begin{proof}[Proof of Theorem~\ref{theorem:main2}]
Let $e_0$ be the generator of 
$H^1(\tilde{B}A_{n-1})^{S_n}$.
 Also, let $v_{n,q}$, $v_n$ be  generators of $H^{1}(\Loop_{\phi^q} BSO_n)$, $H^1(\Loop BSO_n)$, respectively.
 Then, we have 
 \[
 e_{n,q}=\tilde{p}_{n,q}^*(v_{n,q}), \quad \tilde{\alpha}_{n,q}^*(v_{n,q})=e_0=\tilde{\alpha}_{n}^*(v_n), 
 \quad \tilde{p}_{n}^*(v_{n})=e_{n}
 \]
Let us write 
\[
H^{*}(\Loop BSO_n)
\stackrel{({\tilde{\alpha}_n^*})^{-1}}{\longleftarrow}
H^{*}(\tilde{B}A_{n-1})^{S_n}\stackrel{({\tilde{\alpha}_{n,q}^*})^{-1}}{\longrightarrow}
 H^{*}(\Loop_{\phi^q} BSO_n)
\]
 for the inverses of the isomorphisms in Theorem~\ref{theorem:SO}.
We consider the composition of isomorphisms 
$\beta_{n}$, $\beta_{n,q}$  in Propositions~\ref{proposition:free1},
\ref{proposition:free2}  with
$1 \otimes (\tilde{\alpha}_n^*)^{-1}$,
$1 \otimes (\tilde{\alpha}_{n,q}^*)^{-1}$, respectively.
It is clear that 
$\beta_{n,q}\circ (1 \otimes (\tilde{\alpha}_{n,q}^*)^{-1})$,
$\beta_n \circ (1 \otimes (\tilde{\alpha}_n^*)^{-1})$
are algebra homomorphisms. 
It is also clear that 
 \[
 (\beta_{n,q}\circ (1 \otimes (\tilde{\alpha}_{n,q}^*)^{-1}))(1\otimes e_0)=e_{n,q}
 \]
 and
 \[
(\beta_n \circ (1 \otimes (\tilde{\alpha}_n^*)^{-1}))(1\otimes e_0)=e_{n}.
\]
Therefore, 
the following diagram commutes: 
\[
\begin{diagram}
\node{H^{*}(F_{n,q})} \arrow{e,t}{\gamma_{n,q}} \node{H^{*+1}(F_{n,q})=
\Sigma(H^*(F_{n,q}))} \\
\node{H^{*}(B\spin_n)\otimes_{H^{*}(BSO_n)} H^{*}(\tilde{B}A_{n-1})^{S_n}} 
 \arrow{n,lr}{\simeq}{\beta_{n,q}\circ (1 \otimes (\tilde{\alpha}_{n,q}^*)^{-1})}
\arrow{s,lr}{\simeq}{\beta_n \circ (1 \otimes (\tilde{\alpha}_n^*)^{-1})}\arrow{e}
\node{\Sigma( H^{*}(B\spin_n)\otimes_{H^{*}(BSO_n)} H^{*}(\tilde{B}A_{n-1})^{S_n}) }
 \arrow{s,lr}{\simeq}{\Sigma (\beta_n \circ (1 \otimes (\tilde{\alpha}_n^*)^{-1})) }
 \arrow{n,lr}{\simeq}{\Sigma (\beta_{n,q}\circ (1 \otimes (\tilde{\alpha}_{n,q}^*)^{-1}))}
\\
\node{H^{*}(F_n)}  \arrow{e,t}{\gamma_n} \node{H^{*+1}(F_n)
=\Sigma(H^{*}(F_{n})),}  
\end{diagram}
\]
where horizontal homomorphisms are the multiplication by $e_{n,q}, 1\otimes e_0 , e_n$
and $\Sigma^n$ is the degree shift functor such that $(\Sigma^n M)^i=M^{i+n}$.
Thus, 
we obtain isomorphisms of graded $\mathbb{Z}/2$-modules
\begin{align*}
H^{*}(\Loop_{\phi^q} B\spin_n) &= \mathrm{Ker}\, {\gamma_{n,q}} \oplus \Sigma^{-1} \mathrm{Coker}\, {\gamma_{n,q}} \\
&= \mathrm{Ker}\, \gamma_n \oplus \Sigma^{-1} \mathrm{Coker}\, \gamma_n \\
&=H^{*}(\Loop B\spin_n).
\end{align*}
This completes the proof of Theorem~\ref{theorem:main2}.
\end{proof}


\section{Proof of Proposition~\ref{proposition:invariant}}

In order to prove Proposition~\ref{proposition:invariant}, we recall the invariant theory of finite groups. We refer the reader to Smith's book \cite{smith-1995}, for instance.
Let $K$ be a ground field. 
Let $W$ be a finite group acting on $K$-vector space $V$. We denote by 
$V^*$ the dual of $V$ with respect to a basis $v_1, \dots, v_k$.
We denote its dual basis $x_1, \dots, x_k$.
Then the finite group $W$ acts  on a polynomial algebra $K[V]
=K[x_1, \dots, x_k]$. 
The following proposition is Proposition~{5.5.5} in Smith's book \cite{smith-1995}.


\begin{proposition}\label{proposition:smith}
Suppose that $W$ acts on $V$ faithfully. 
Suppose that $\dim V=k$.
Let $R$ be a subalgebra generated by homogeneous polynomials 
$f_1, \dots, f_{k}$ and $R\subset K[V]^{W}$. If the inclusion $R\to K[V]$ is finite, 
and
if $\deg f_1 \cdots \deg f_{k}=|W|$, then $R$ is a polynomial algebra and 
$$R=K[V]^{W}.$$
\end{proposition}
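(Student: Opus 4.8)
The plan is to prove both assertions — that $R$ is a polynomial algebra and that $R = K[V]^W$ — through a chain of commutative-algebra and Galois-theoretic comparisons, with the numerical hypothesis $\deg f_1 \cdots \deg f_k = |W|$ entering through a single Hilbert-series computation. Throughout I write $E = \mathrm{Frac}(R)$ and $F = \mathrm{Frac}(K[V])$, and $d_i = \deg f_i$.

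First I would settle that $R$ is a polynomial algebra. Since $R$ is a subring of the domain $K[V]$ it is a domain, and the finiteness of $R \to K[V]$ makes $K[V]$ integral over $R$, so the two rings have the same Krull dimension, namely $k$. As $R$ is generated over $K$ by the $k$ homogeneous elements $f_1, \dots, f_k$ yet has transcendence degree $k$ over $K$, these generators must be algebraically independent; hence $R \cong K[f_1, \dots, f_k]$ as graded algebras. Equivalently, the finiteness hypothesis says precisely that $f_1, \dots, f_k$ is a homogeneous system of parameters for $K[V]$.

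Next I would show that $K[V]$ is a free graded $R$-module and compute its rank. Because $K[V]$ is Cohen-Macaulay and $f_1, \dots, f_k$ is a homogeneous system of parameters, it is a regular sequence on $K[V]$, so $K[V]$ is free over the polynomial subring $R$. To extract the rank I compare Hilbert series: one has
\[
P(K[V], t) = \frac{1}{(1-t)^k}, \qquad P(R, t) = \prod_{i=1}^{k} \frac{1}{1 - t^{d_i}},
\]
so the Hilbert series of the finite-dimensional quotient $A = K[V]/(f_1, \dots, f_k)$ equals $P(K[V],t)/P(R,t) = \prod_{i=1}^{k} (1 + t + \cdots + t^{d_i - 1})$. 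Setting $t = 1$ gives $\dim_K A = d_1 \cdots d_k = |W|$, which is exactly the rank of $K[V]$ as a free $R$-module. Localizing at $R \setminus \{0\}$ then shows $K[V] \otimes_R E$ is a field containing $K[V]$, hence equals $F$, and is $E$-free of that rank, so $[F : E] = |W|$.

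Finally I would match this with Galois theory and conclude by integral closure. Since $W$ acts faithfully on $V$ it acts faithfully on $F$, and Artin's theorem gives $[F : F^W] = |W|$, where $F^W = \mathrm{Frac}(K[V]^W)$. From $R \subseteq K[V]^W$ we obtain the tower $E \subseteq F^W \subseteq F$, and $[F:E] = [F:F^W][F^W:E]$ together with $[F:E] = [F:F^W] = |W|$ forces $F^W = E$; thus $R$ and $K[V]^W$ share the fraction field $E$. Now $K[V]^W$ is integral over $R$, being contained in $K[V]$, while $R$ — a polynomial ring, hence a UFD — is integrally closed in $E$; therefore $K[V]^W \subseteq R$, and with the given inclusion $R \subseteq K[V]^W$ this yields $R = K[V]^W$. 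The step I expect to be the main obstacle is the middle one: establishing freeness of $K[V]$ over $R$ by combining the Cohen-Macaulay property with the homogeneous-system-of-parameters condition, and reading off the rank from the Hilbert series. That is the only place where the hypothesis $\deg f_1 \cdots \deg f_k = |W|$ is used, and it is the hinge on which the two degree counts $[F:E] = |W|$ and $[F:F^W] = |W|$ are made to agree; the Galois-theoretic and integral-closure arguments afterward are then formal.
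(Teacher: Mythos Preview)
The paper does not supply a proof of this proposition: it is quoted verbatim as Proposition~5.5.5 from Smith's book \cite{smith-1995} and used as a black box. Your argument is correct and is essentially the standard proof one finds in invariant-theory texts (and, in outline, in Smith): establish that the $f_i$ form a homogeneous system of parameters, use Cohen--Macaulayness of $K[V]$ to get freeness over $R$ with rank $\prod d_i$ read off from Hilbert series, match this with the Galois degree $[F:F^W]=|W|$, and close with integral closure of the polynomial ring $R$. There is nothing to compare against in the paper itself.
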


Now, let $K=\mathbb{Z}/2$ and consider the action of 
$S_n$ on $A_{n-1}$ and $H^1(BA_{n-1})$.
The group $A_{n-1}$ is the diagonal matrixes  in $SO(n)$ 
whose diagonal entries are $\pm 1$.
Let $A_{n}$ be the subgroup of $O(n)$ consisting of 
diagonal matrixes whose diagonal entries are $\pm 1$.
$S_n$ acts on $A_{n}$ by permutation of diagonal entries.
Then, $H^{*}(BA_n)=\mathbb{Z}/2[x_1, \dots, x_n]$ and
$H^{*}(BA_n)^{S_n}=\mathbb{Z}/2[w_1, \dots, w_n]$, where 
$w_i$ is the $i$-th symmetric function of $x_1, \dots, x_n$
and there exists an obvious $S_n$-equivariant epimorphism 
\[
H^{*}(BA_n) \to H^{*}(BA_{n-1})
\]
induced by the inclusion of $A_{n-1}$ into $A_n$.
Moreover, we have 
\[
H^{*}(BA_{n-1})=\mathbb{Z}/2[x_1, \dots, x_n]/(x_1+\cdots+x_n).
\]
See Corollary 4.19, p.135, in the book of Mimura and Toda \cite{mimura-toda-book}.
It is clear that $H^{*}(BA_n)^{S_n}\to H^{*}(BA_n)$ is finite.
Therefore, the composition 
\[
H^{*}(BA_n)^{S_n}\to H^{*}(BA_n) \to H^{*}(BA_{n-1})
\]
is also finite.
Since $w_1$ maps to $0$, the inclusion of the subalgebra  of $H^{*}(BA_{n-1})$ 
generated by $w_2, \dots, w_n$ into $H^{*}(BA_{n-1})$ is also finite.
It is also clear that $\deg w_2 \times \cdots \times \deg w_{n}=n!=|S_n|$.
Therefore, if the action of $S_n$ is faithful on $H^1(BA_{n-1})$, by Proposition~\ref{proposition:smith} above,
we have 
\[
H^{*}(BA_{n-1})^{S_n}=\mathbb{Z}/2[w_2, \dots, w_n].
\]


So, it remains to see that the action of $S_n$ on $H_1(BA_{n-1})$ is faithful,
or equivalently that its action on 
$H^1(BA_{n-1})$ is 
faithful.


\begin{proposition}\label{proposition:faithful}
For $n \geq 3$, $S_n$ acts on $H^1(BA_{n-1})$ faithfully.
\end{proposition}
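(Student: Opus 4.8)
The plan is to work directly with the explicit description $H^1(BA_{n-1}) \cong H^1(BA_n)/(x_1+\cdots+x_n)$, where $H^1(BA_n)$ has basis $x_1,\dots,x_n$ and $S_n$ permutes the $x_i$. So $H^1(BA_{n-1})$ is the standard $(n-1)$-dimensional quotient representation of $S_n$ over $\mathbb{Z}/2$, realized as the quotient of the permutation module $\mathbb{Z}/2\langle x_1,\dots,x_n\rangle$ by the line spanned by $x_1+\cdots+x_n$. The claim is that this quotient representation is faithful for $n\geq 3$.

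First I would observe that it suffices to show that no nontrivial $g\in S_n$ acts trivially on $H^1(BA_{n-1})$. Since $S_n$ is generated by transpositions, and more to the point every nontrivial element moves some letter, I would argue by contradiction: suppose $g\neq e$ acts trivially on the quotient, i.e. $g(x_i)-x_i \in \langle x_1+\cdots+x_n\rangle$ for every $i$. Pick $i$ with $g(i)=j\neq i$; then $x_j - x_i = x_j + x_i$ must equal either $0$ or $x_1+\cdots+x_n$ in $H^1(BA_n)$. It cannot be $0$ since $i\neq j$, so we would need $x_i + x_j = x_1+\cdots+x_n$, which forces $n=2$. This contradicts $n\geq 3$, so the action is faithful.

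The remaining step is simply to remark that faithfulness on $H^1(BA_{n-1})$ is equivalent to faithfulness on $H_1(BA_{n-1})$ by duality over $\mathbb{Z}/2$ (the dual of a faithful representation is faithful), which is what is needed to invoke Proposition~\ref{proposition:smith} and conclude Proposition~\ref{proposition:invariant}. I do not expect any genuine obstacle here: the only subtlety is keeping straight that the relation being quotiented out is the single element $x_1+\cdots+x_n$ rather than, say, a larger submodule, and checking the edge of the range — indeed the argument visibly breaks at $n=2$, consistent with the remark in the excerpt that $H^*(BA_1)^{S_2}\neq \mathbb{Z}/2[w_2]$. For $n\geq 3$ the computation $x_i+x_j = x_1+\cdots+x_n$ is impossible because the left side involves exactly two of the $n\geq 3$ generators, and since $\{x_1,\dots,x_n\}$ is a basis of $H^1(BA_n)$ this is an honest inequality of basis-vector sums.
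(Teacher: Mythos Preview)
Your proof is correct and in fact cleaner than the paper's. The paper argues faithfulness by showing that the image of $S_n$ inside $GL_{n-1}(\mathbb{Z}/2)$ has order at least $n!$: it takes the subgroup $S_{n-1}$ permuting $x_1,\dots,x_{n-1}$ (which visibly acts faithfully on $H^1(BA_{n-1})$), explicitly writes down the element $\sigma$ coming from the $n$-cycle, and then verifies that the cosets $S_{n-1}\sigma^j$ for $0\le j\le n-1$ are pairwise disjoint in the image. Your argument instead attacks the kernel directly: if a nontrivial $g$ acted trivially on the quotient, then for some $i\neq j$ we would have $x_i+x_j\in\langle x_1+\cdots+x_n\rangle$, which is impossible for $n\geq 3$. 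This is more elementary and avoids any bookkeeping with cosets; the paper's route has the minor advantage of making the generating set and the role of the $n$-cycle explicit, but for the bare statement of faithfulness your computation is all that is needed.
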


\begin{proof} 
Let $S_{n-1}$ be the subgroup consisting of permutations of $x_1, \dots, x_{n-1}$.
 Let $\sigma(x_i)=x_{i+1}$ for $i=1, \dots, n-2$ and $\sigma(x_{n-1})=x_n=x_1+\cdots+x_{n-1}$.
It is clear that $\sigma^{n}(x_i)=x_i$ for $i=1, \dots, n-1$. Thus, $\sigma^{n}=1$.
 Moreover, $\sigma^j(x_{n-j})=x_1+\cdots+x_{n-1}$, hence $\sigma^j  \not \in S_{n-1}$
  for $j=1, \dots, n-1$. 
  In order to prove that the action of $S_n$ on $H^1(BA_{n-1})$ is faithful, it suffices to show that
   $\sigma$ and $S_{n-1}$ generate a subgroup of $GL_{n-1}(\mathbb{Z}/2)$ 
   whose order is greater than or equal to $n!$.
 Let $\tau \in S_{n-1}$.
 Then, 
 $\tau \sigma^j(x_{n-j})=x_1+\cdots+x_{n-1}$ and $\tau \sigma (x_i)=x_{k}$ for some 
 $k$  for each $i \not = n-j$. 
 Therefore, $S_{n-1} \cap   S_{n-1}\sigma^j =\emptyset$ for $1\leq j\leq n-1$.
 Hence, $S_{n-1}\sigma^i \cap   S_{n-1}\sigma^{k} =\emptyset$ for $0\leq i<k\leq n-1$.
So, $\sigma$ and $S_{n-1}$ generate
 a subgroup of order greater than or equal to $S_{n}$. Hence, the action of $S_{n}$ on 
 $H^1(BA_{n-1})$ is faithful.
\end{proof}


\section{Conjecture}


We end this paper  with the following conjecture similar to Theorem~\ref{theorem:SO}
for $B \spin_n$.
Consider  the subgroup 
\[
\bar{A}_{n-1}=p_{n}^{-1}(A_{n-1})
\]
 in $\spin(n)$, where 
$p_n:\spin(n)\to SO(n)$ is the obvious projection and $A_{n-1}$ is the elementary abelian $2$-subgroup 
consisting of diagonal matrices whose non-zero entries are $\pm 1$ as in \S1.
Let $\alpha_n:B\bar{A}_{n-1} \to \Loop B\spin_n$ be the map induced by the inclusion
of $\bar{A}_{n-1}$ into $\spin(n)$.
We define  maps
\[
\tilde{\alpha}_n:\tilde{B}\bar{A}_{n-1}
\to \Loop B\spin_n, \quad \tilde{\alpha}_{n,q}:\tilde{B}\bar{A}_{n-1}
\to \Loop_{\phi^q} B\spin_n
\] by the following pull-back diagram:
\[
\begin{diagram}
\node{\Loop B\spin_n} \arrow{s,l}{\pi_{n}}
\node{\tilde{B}\bar{A}_{n-1}} 
\arrow{e,t}{\tilde{\alpha}_{n,q}}  
\arrow{s,r}{\pi_0}
\arrow{w,t}{\tilde{\alpha}_n}
\node{\Loop_{\phi^q} B\spin_n} \arrow{s,r}{\pi_{n,q}}
\\
\node{B\spin_n}
\node{B\bar{A}_{n-1}} 
\arrow{e,t}{\alpha_n} 
\arrow{w,t}{\alpha_n}
\node{B\spin_n.}
\end{diagram}
\]

\begin{conjecture}
Let $q$ be a power of an odd prime $p$ and $\mathbb{F}_q$
 the finite field of $q$ elements.
For $n\geq 3$, 
the induced homomorphisms
\[
H^{*}(\Loop B\spin_n) \stackrel{\tilde{\alpha}_n^*}{\longrightarrow}
 H^{*}(\tilde{B}\bar{A}_{n-1})
 \stackrel{\tilde{\alpha}_{n,q}^*}{\longleftarrow}
H^{*}(\Loop_{\phi^q}B\spin_n)
\]
are monomorphisms
and that
the image of $\tilde{\alpha}_{n,q}^*$ is  the same to that of $\tilde{\alpha}_n^*$.
\end{conjecture}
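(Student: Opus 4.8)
The plan is to run, for $\spin(n)$ and $\bar{A}_{n-1}=p_n^{-1}(A_{n-1})$, the machine of \S4. The pull-back square defining $\tilde{B}\bar{A}_{n-1}$ produces three fibrations with the common fibre $\Omega B\spin_n$ — over $B\spin_n$ for $\Loop B\spin_n$, over $B\bar{A}_{n-1}$ for $\tilde{B}\bar{A}_{n-1}$, and over $B\spin_n$ for $\Loop_{\phi^q}B\spin_n$ — and $\tilde{\alpha}_n$, $\tilde{\alpha}_{n,q}$ induce maps of the associated Leray--Serre spectral sequences which are the identity on $H^*(\Omega B\spin_n)$ and $\alpha_n^*\colon H^*(B\spin_n)\to H^*(B\bar{A}_{n-1})$ on the base, just as in Proposition~\ref{proposition:mono}.

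The monomorphism part should then follow verbatim from \S4 once two facts are supplied. First, that the fibre restriction $i_n^*\colon H^*(\Loop B\spin_n)\to H^*(\Omega B\spin_n)$ is onto — equivalently, that $H^*(\Omega B\spin_n)$ has a simple system of transgressive generators; this is Theorem~\ref{theorem:polynomial} for $n\le 9$ and should hold in general by the known structure of the mod $2$ cohomology of $\spin(n)$. Given it, $i_0^*\colon H^*(\tilde{B}\bar{A}_{n-1})\to H^*(\Omega B\spin_n)$ is onto too (since $i_n^*=i_0^*\circ\tilde{\alpha}_n^*$), so by Leray--Hirsch $E_r^{*,*}(\tilde{B}\bar{A}_{n-1})$ collapses at $E_2$ with $E_\infty=H^*(B\bar{A}_{n-1})\otimes H^*(\Omega B\spin_n)$. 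Second, that $\alpha_n^*\colon H^*(B\spin_n)\to H^*(B\bar{A}_{n-1})$ is a monomorphism — the $\spin$-analogue of the monomorphism part of Proposition~\ref{proposition:invariant}. Granting it, $\alpha_n^*\otimes 1$ is injective on every $E_r$-page, and the vanishing of all differentials of $E_r^{*,*}(\tilde{B}\bar{A}_{n-1})$ forces, by the induction in Proposition~\ref{proposition:mono}, the collapse of all three spectral sequences and the injectivity of $\tilde{\alpha}_n^*$ and $\tilde{\alpha}_{n,q}^*$. (Since $\pi_n$ and $\pi_0$ admit sections by constant loops, injectivity of $\tilde{\alpha}_n^*$ in fact forces that of $\alpha_n^*$, so this second fact is in any case necessary.) As in the paper's main theorems, this route never requires $H^*(\Loop B\spin(n))$ itself.

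For the equality of the two images the proof of Theorem~\ref{theorem:SO} does not transcribe: $H^*(B\bar{A}_{n-1})$ is not a polynomial algebra, as $\bar{A}_{n-1}$ is a nonabelian central extension of $A_{n-1}$ by $\mathbb{Z}/2$, so the invariant theory of Proposition~\ref{proposition:smith} is unavailable, and the degree-one normalization of Proposition~\ref{proposition:one} is vacuous since $B\spin_n$ is $3$-connected. Two routes suggest themselves. One is to pull the whole \S4 diagram back along $B\bar{A}_{n-1}\to B\spin_n$ and $\bar{A}_{n-1}\to A_{n-1}$: then $\tilde{B}\bar{A}_{n-1}$ sits over $\tilde{B}A_{n-1}$ as a $B\mathbb{Z}/2$-bundle and over $F_n$-type spaces as an $S^0$-bundle, and one would try to derive the image equality for $\spin_n$ from Theorem~\ref{theorem:SO} by chasing the resulting Gysin and Serre sequences. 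The other is direct: both images are subalgebras over the Steenrod algebra of $H^*(\tilde{B}\bar{A}_{n-1})$ containing $\pi_0^*\alpha_n^*(H^*(B\spin_n))$, both are free over it on a graded basis reducing to a basis of $H^*(\Omega B\spin_n)$, and both have the same Poincar\'e series by Theorem~\ref{theorem:main2}; one would try to exhibit explicit generators as $\sigma$-classes in each picture (in the sense of \S3) and match them using the derivation property of $\sigma$ and the relations $\sigma(x)^2=\sigma(\phi(x))$.

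The hard part is precisely this equality of images, and beneath it the explicit description of $\alpha_n^*\colon H^*(B\spin(n))\to H^*(B\bar{A}_{n-1})$. This rests on Quillen's computations of the mod $2$ cohomology of the spinor groups and their $2$-subgroups, and one cannot expect the clean answer ``image $=$ ring of invariants'' valid for $SO(n)$; pinning down the two images, rather than merely matching their graded dimensions via Theorem~\ref{theorem:main2}, is presumably why the statement is left as a conjecture.
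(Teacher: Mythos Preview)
The paper does not prove this statement: it is the final conjecture of \S6, stated without any accompanying argument, and the sentence following it (``If it holds, this conjecture implies\dots'') makes its open status explicit. There is therefore no proof in the paper to compare your proposal against.

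Your proposal is, accordingly, not a proof either but an analysis of what a proof along the lines of \S4 would require, and on that level it is accurate. You correctly isolate the two ingredients the monomorphism half would need --- surjectivity of $i_n^*\colon H^*(\Loop B\spin_n)\to H^*(\Omega B\spin_n)$, and injectivity of $\alpha_n^*\colon H^*(B\spin_n)\to H^*(B\bar{A}_{n-1})$ --- and you are right that neither is supplied by the paper for general $n$: Theorem~\ref{theorem:polynomial} gives the first only for $n\le 9$, and the paper nowhere addresses the second. You also correctly identify that the equality-of-images half is where the \S4 machine genuinely breaks, since Proposition~\ref{proposition:smith} requires a polynomial algebra and $H^*(B\bar{A}_{n-1})$ is not one. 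Your closing remark, that this is presumably why the statement is left as a conjecture, matches the paper's own stance exactly.

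In short: there is no gap to name and no alternative to compare, because the paper offers no proof; your write-up is a reasonable reconnaissance of the problem rather than a proof, and you flag this yourself.
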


If it holds, this conjecture implies that the mod $2$ cohomology of 
the free loop space of the classifying space $B\spin(n)$
 is isomorphic to 
the mod $2$ cohomology of the finite Chevalley group 
$\spin_n(\mathbb{F}_q)$
not only as a ring but also as an algebra over the mod $2$ 
Steenrod algebra, that is, a  strong form of Tezuka's conjecture.

 \end{document}